\newcommand{\R}{\mathbb{R}}
\newcommand{\eqskip}{ \vspace*{2mm}\\ }
\newtheorem{Lemma}{Lemma}
\newtheorem{Theorem}{Theorem}
\newtheorem{Proposition}{Proposition}
\theoremstyle{definition}
\newtheorem{Remark}{Remark}
\numberwithin{equation}{section}
\definecolor{DarkGreen}{rgb}{0,0.5,0.1}
\newcommand\soutD{\bgroup\markoverwith
{\textcolor{DarkGreen}{\rule[.5ex]{2pt}{1pt}}}\ULon}
\newcommand\soutP{\bgroup\markoverwith
{\textcolor{blue}{\rule[.5ex]{2pt}{1pt}}}\ULon}
\newcommand{\Hm}[1]{\leavevmode{\marginpar{\tiny%
$\hbox to 0mm{\hspace*{-0.5mm}$\leftarrow$\hss}%
\vcenter{\vrule depth 0.1mm height 0.1mm width \the\marginparwidth}%
\hbox to
0mm{\hss$\rightarrow$\hspace*{-0.5mm}}$\\\relax\raggedright #1}}}
\begin{document}

\title[Orbital stability]{On the existence and partial stability of standing waves for a nematic liquid crystal director field equations}
\author[Amorim, Casteras, and Dias]{ Paulo Amorim, Jean-Baptiste Casteras, and Jo\~{a}o-Paulo Dias}

\address{Instituto de Matem\'{a}tica, Universidade Federal do Rio de Janeiro, Av. Athos da Silveira Ramos 149, C.P. 68530, 21941-909 Rio de Janeiro, RJ- Brasil}
\email{paulo@im.ufrj.br}

\address{CMAFcCIO \& Departamento de Matem\'{a}tica, Faculdade de Ci\^{e}ncias da Universidade de Lisboa, Campo Grande, Edif\'{\i}cio C6,
1749-016 Lisboa, Portugal} \email{jeanbaptiste.casteras@gmail.com} \email{jpdias@fc.ul.pt }

\date{\today}

\begin{abstract}
  In this paper, following the studies in~\cite{ADM}, we consider some new aspects of the motion of the director field of a nematic liquid crystal submitted to a magnetic field and to a laser beam. In particular, we study the existence and partial orbital stability of special standing waves, in the spirit of ~\cite{CL} and ~\cite{HS et al} and we present some numerical simulations.
\bigskip

  \bigskip
\begin{itemize}
\item[\textbf{Keywords:}]
liquid crystal, nematic, magnetic field, laser beam, standing waves, stability.
\item[\textbf{MSC (2020):}]
76D03,76W05,78-10,35L53, 35C07.
\end{itemize}
\end{abstract}

\maketitle

\section{Introduction and main results}\label{Introduction}

 A great number of technological applications related to data display and non-linear optics, use thin films of nematic liquid cristals, cf. ~\cite{De Gennes} for the general theory  of nematic liquid cristals. In such devices the local direction of the optical axis of the liquid crystal is represented by a unit vector $\mathbf n(x,t)$,  called  the director, and may be modified by the application of an electric or magnetic field. The interaction of a light beam with the dynamics of the director $\mathbf n(x,t)$, under a magnetic field, helps to improve the device performance. 

In this paper we consider the model introduced in  ~\cite{ADM} to describe the motion of the director field of a nematic liquid crystal submitted to an external constant strong magnetic field  $\mathbf H$, with intensity $H \in \R$, and also to a laser beam, assuming some simplifications and approximations motivated by previous experiments and models (cf. ~\cite{Baqer et al}, ~\cite{Baqer et al 1},~\cite{Martins et al} and ~\cite{MI}, for magneto-optic experiments, and ~\cite{HS},~\cite{ZZ} for the simplified director field equation).\\
The system under consideration reads
\begin{equation} 
  \label{(1.1)}
  \begin{array}{ll}
    \left\{
    \begin{array}{l}
      i u_{t} + u_{xx}= -\rho u + a |u|^{2}u + H^{2} x^{2} u\eqskip
      \rho_{tt} =  (\sigma (v))_{x} - b\rho + |u|^{2},
    \end{array}
    \right.
    &   x\in\R,\; t\geq 0,
  \end{array}
\end{equation}
where $i$ is the imaginary unit, $u(x,t)$ is a complex valued function representing the wave function associated  to the laser beam under the presence of the magnetic field  $\mathbf H$ orthogonal to the director field, $ \rho \in \R $ measures the angle of the director field  with de $x$ axis,$v=\rho_{x}$, $a,H \in \R , b > 0$ are given constants,  with initial data
\begin{equation} 
\label{(1.2)}
  u(x,0)=u_{0}(x), \rho(x,0)= \rho_{0}(x), \rho_{t}(x,0)=\rho_{1}(x), x \in \R,
\end{equation}
and where the function $\sigma(v)$ is given by
\begin{equation}
\label{(1.3)} \sigma(v) = \alpha v + \lambda v^3, \quad  \lambda = \frac{2}{3} \gamma (\alpha-\beta),
\end{equation}
where $\alpha \geq \beta > 0$ are elastic constants of the liquid crystal, cf. ~\cite{HS}, and
\begin{equation} 
\label{(1.4)} \gamma = 4(\chi_a)^{-1} H^{-2} \beta > 0,
\end{equation}
where  $\chi_a > 0$ is the anisotropy of the magnetic susceptibility, cf. ~\cite{Martins et al}.\\
In the quasilinear case $\alpha > \beta, \alpha \simeq \beta$, the study of the existence of a weak global solution  to the Cauchy problem for the system \eqref{(1.1)} with the initial data \eqref{(1.2)}, in suitable spaces, has been developed in ~\cite{ADM}, by application of the compensated compactness method introduced in ~\cite{SS} to the regularised system with a physical viscosity and the vanishing viscosity method (cf.~also ~\cite{DFF} and ~\cite{DFO} for two examples of this technique applied to related systems of short waves-long waves).\\
In Section 2 we prove, in the general case ($\lambda \geq 0$), by application  of Theorem 6 in ~\cite{K}, a local in time existence and uniqueness theorem of a classical solution for the Cauchy problem ~\eqref{(1.1)},\eqref{(1.2)}.  For this purpose we need to introduce some functional spaces and point out several well known results:\\
Let be the linear operator defined in $L^2(\R)$ by
\begin{equation} 
\label{(1.5)} Au = u_{xx} - H^2x^2u ,\, u\in D(A), H \neq{0},
\end{equation}
where $D(A)= \big\{ u \in X_A | Au \in L^2 (\R) \big\}$ , with
\begin{equation} 
\label{(1.6)} X_A = \big\{ u \in H^1(\R) | xu \in L^2 (\R) \big\}.
\end{equation}
It can be proved, cf.~\cite{W}, that if $u\in X_1=\big\{ u  | xu, u_x \in L^2 (\R) \big\}$, then $u\in L^2(\R)$ with (denoting by $\|.\|_p$  the norm $\|.\|_{L^p(\R)}$)
\begin{equation} 
\label{(1.7)} \|u\|^2_2 \leq 2^{\frac{1}{2}}\|u_x\|_2 \|xu\|_2, \forall u \in X_1,
\end{equation}
and so $X_A=X_1$, and it is not difficult to prove that the injection of $X_A$ in $L^q(\R), 2\leq q < + \infty$, is compact (cf. ~\cite{HS et al}).\\
Moreover, it may be also proved, cf. ~\cite {C}, lemma 9.2.1, that $A$ is self-adjoint in $L^2(\R)$, $(Au,u) \leq 0, \forall u\in D(A)$, and (cf.~\cite{HO}),
\begin{equation} 
\label{(1.8)} D(A) = (-A+1)^{-1} L^2(\R) = \big\{ u \in H^2(\R) | x^2 u \in L^2 (\R) \big\}.
\end {equation}
We can now state the first result that will be proved in Section 2:
\begin {Theorem} \label{T1} Let $(u_0,\rho_0,\rho_1)\in D(A)\times H^3\times H^2$ and $\lambda \geq 0$. Then, there exists  $ T^* = T^*(u_0,\rho_0,\rho_1) > 0$ such that, for all  $T < T^*$, there exists  an unique solution $(u,\rho)$ to the Cauchy problem \eqref{(1.1)},\eqref{(1.2)} with
$u\in C ( [0,T] ; D(A) )\cap C^1( [0,T] ; L^2 )$ and $ \rho\in C ( [0,T] ; H^3 )\cap C^1( [0,T] ; H^2 ) \cap C^2 ( [0,T] ; H^1)$.
\end{Theorem}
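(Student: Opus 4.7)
The plan is to reduce \eqref{(1.1)}--\eqref{(1.2)} to a quasilinear abstract evolution equation of the form $\partial_t U + \mathcal{A}(U)U = F(U)$, $U(0) = U_0$, and invoke Theorem~6 of \cite{K}. Introducing $w := \rho_t$ and $U := (u, \rho, w)$, the wave equation becomes the first-order system $\rho_t = w$, $w_t = \sigma'(\rho_x)\rho_{xx} - b\rho + |u|^2$, with $\sigma'(v) = \alpha + 3\lambda v^2 \geq \alpha > 0$; here the hypothesis $\lambda \geq 0$ is essential, as it preserves strict hyperbolicity of the wave block. The principal operator $\mathcal{A}(U)$ is block-diagonal: on the Schr\"odinger component it is the $U$-independent skew-adjoint operator $-iA$, which, by \eqref{(1.5)}--\eqref{(1.8)} and Stone's theorem, generates the unitary group $e^{itA}$ on $L^2(\R)$; on the wave component it is the standard first-order reduction of $-\sigma'(\rho_x)\partial_{xx}$, which, for fixed $\rho$ with $\rho_x \in H^2$, generates a $C_0$-semigroup on $H^2 \times H^1$ equipped with the $U$-dependent energy norm $\bigl(\int \sigma'(\rho_x)|\rho'_x|^2 + |w'|^2\,dx\bigr)^{1/2}$.

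Next, I would choose the Banach spaces $X = L^2(\R) \times H^2(\R) \times H^1(\R)$ and $Y = D(A) \times H^3(\R) \times H^2(\R)$, with $Y$ densely and continuously embedded in $X$, and take as isomorphism $S: Y \to X$ the block-diagonal operator $\mathrm{diag}(-A+1,\, -\partial_{xx}+1,\, -\partial_{xx}+1)$. The lower-order source $F(U) = \bigl(i\rho u - ia|u|^2 u,\, 0,\, -b\rho + |u|^2\bigr)$ then maps $Y$ into $Y$ and is Lipschitz on bounded subsets of $Y$, using the algebra properties of $H^2(\R)$ and $H^3(\R)$ in one space dimension, the embedding $H^1(\R) \hookrightarrow L^\infty(\R)$, and the fact that the $D(A)$-norm controls both $\|u_{xx}\|_2$ and $\|x^2 u\|_2$ via \eqref{(1.7)}--\eqref{(1.8)}.

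The core of the proof, and its main technical obstacle, is to verify Kato's hypotheses for the family $\{\mathcal{A}(U)\}_{U \in W}$, where $W \subset Y$ is a ball around $U_0$: (H1) quasi-stability of the family in $X$; (H2) that $S\mathcal{A}(U)S^{-1} - \mathcal{A}(U)$ extends to a bounded operator on $X$ depending Lipschitz-continuously on $U$; (H3) that $U \mapsto \mathcal{A}(U)$ is Lipschitz as a map $Y \to B(Y,X)$. The Schr\"odinger block, being $U$-independent, trivially satisfies (H1)--(H3). For the quasilinear wave block, the quasi-stability estimate follows from a commutator argument: the principal commutator $[\partial_{xx}, \sigma'(\rho_x)]$ is bounded on $L^2$ because $\sigma'(\rho_x) = \alpha + 3\lambda \rho_x^2$ has $\rho_x \in H^2 \hookrightarrow C^1$, so all relevant coefficients lie in $L^\infty$. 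Once (H1)--(H3) are established, Theorem~6 of \cite{K} yields a maximal time $T^* > 0$ and a unique $U \in C([0,T]; Y) \cap C^1([0,T]; X)$ for every $T < T^*$; reading off the components and using $w = \rho_t$ recovers the regularity stated, with $\rho \in C^2([0,T]; H^1)$ following from $w \in C^1([0,T]; H^1)$ via the equation for $w$.
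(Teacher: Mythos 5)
Your overall strategy --- recast \eqref{(1.1)} as $\partial_t U+\mathcal{A}(U)U=F(U)$ and apply Theorem~6 of \cite{K} --- is the same as the paper's, but your reduction of the wave part is genuinely different. The paper first passes to the Riemann invariants $l=w+\int_0^v\sqrt{\alpha+3\lambda\xi^2}\,d\xi$, $r=w-\int_0^v\sqrt{\alpha+3\lambda\xi^2}\,d\xi$ (using $\lambda\ge 0$ exactly where you use it, to keep $\sigma'>0$ and the map $v\mapsto l-r$ invertible), so that the hyperbolic block becomes a first-order \emph{diagonal} system $l_t-\sqrt{\alpha+3\lambda v^2}\,l_x=\cdots$, $r_t+\sqrt{\alpha+3\lambda v^2}\,r_x=\cdots$ posed on $X=(L^2)^3$, $Y=(H^2)^3$ with $S=(1-\Delta)I$; this is precisely the situation of Section~12 of \cite{K}, where the stability of the family and the boundedness of $B_0=[(1-\Delta),a](1-\Delta)^{-1}$ are already established, and the $H^3$ regularity of $\rho$ is recovered at the end from $\rho_x=v=f^{-1}(l-r)\in C([0,T];H^2)$. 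You instead keep the second-order operator $-\sigma'(\rho_x)\partial_{xx}$ and work on $H^2\times H^1$ with $U$-dependent energy norms, in the spirit of Hughes--Kato--Marsden. That route can be made to work, and it has the advantage of reading off $\rho\in C([0,T];H^3)$ directly, but it places the entire burden on the variable-norm stability argument, which is exactly the part you leave unverified.

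Two concrete points do not close as written. First, your $S=\mathrm{diag}(-A+1,\,-\partial_{xx}+1,\,-\partial_{xx}+1)$ is not an isomorphism from $Y=D(A)\times H^3\times H^2$ onto $X=L^2\times H^2\times H^1$: the operator $1-\partial_{xx}$ drops two derivatives, so it maps $H^3$ onto $H^1$ and $H^2$ onto $L^2$, not onto $H^2$ and $H^1$. You would need $(1-\partial_{xx})^{1/2}$ on the wave components (or a different choice of $X$), and the commutator computations must then be redone for that $S$. Second, the claim that $[\partial_{xx},\sigma'(\rho_x)]$ is bounded on $L^2$ is false: $[\partial_{xx},\sigma'(\rho_x)]f=(\sigma'(\rho_x))_{xx}f+2(\sigma'(\rho_x))_x f_x$ is bounded only from $H^1$ to $L^2$, so ``all coefficients in $L^\infty$'' is not enough; the stability and commutator estimates have to be carried out at the level of the energy norm on $H^2\times H^1$ (or, after correcting $S$, for the commutator of $(1-\partial_{xx})^{1/2}$ with $\sigma'(v)\partial_{xx}$ between the appropriate spaces). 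Neither issue is fatal --- both are repaired by the standard second-order machinery, or avoided altogether by switching to the paper's Riemann-invariant reduction, for which Section~12 of \cite{K} supplies these estimates ready-made --- but as stated the verification of Kato's hypotheses (7.1)--(7.7) is not complete.
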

As it is well known, in the quasilinear case the local solution, in general, blows-up in finite time. 

In Section 3, by obtaining the convenient estimates, we prove the following result in the semilinear case ($\alpha = \beta$):
\begin{Theorem} \label{T2} Let $(u_0,\rho_0,\rho_1)\in D(A)\times H^3\times H^2$ and $\lambda=0$. Then, there exists an unique global in time solution $(u,\rho)$ to the Cauchy problem \eqref{(1.1)},\eqref{(1.2)}, with $u\in C ( [0,+\infty) ; D(A) )\cap C^1( [0,+\infty) ; L^2 )$ and $ \rho\in C ( [0,+\infty) ; H^3 )\cap C^1( [0,+\infty) ; H^2 ) \cap C^2 ( [0,+\infty); H^1)$.
\end{Theorem}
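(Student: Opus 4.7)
The plan is to extend the local solution of Theorem~\ref{T1} to $[0,+\infty)$ by deriving a priori estimates on the $D(A)\times H^3\times H^2$-norm of $(u,\rho,\rho_t)$ over any fixed interval $[0,T]\subset[0,T^*)$; once such estimates are in hand, the blow-up alternative implicit in Kato's Theorem~6 of~\cite{K} forces $T^*=+\infty$. The first step is to obtain the two conservation laws: mass conservation $\|u\|_2^2=\|u_0\|_2^2$ from multiplying the Schr\"odinger equation by $\bar u$ and extracting the imaginary part, and energy conservation
\[
\mathcal{E}(t)=\tfrac12\|u_x\|_2^2+\tfrac{H^2}{2}\|xu\|_2^2+\tfrac{a}{4}\|u\|_4^4-\tfrac12\int \rho|u|^2\,dx+\tfrac14\|\rho_t\|_2^2+\tfrac{\alpha}{4}\|\rho_x\|_2^2+\tfrac{b}{4}\|\rho\|_2^2,
\]
obtained by multiplying the Schr\"odinger equation by $\bar u_t$ (real part) and the wave equation by $\tfrac12\rho_t$, then summing. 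Using the one-dimensional embedding $\|\rho\|_\infty\le C\|\rho\|_2^{1/2}\|\rho_x\|_2^{1/2}$ together with mass conservation to absorb $|\int\rho|u|^2\,dx|$ into $\tfrac{\alpha}{8}\|\rho_x\|_2^2+\tfrac{b}{8}\|\rho\|_2^2$ modulo constants depending only on $\|u_0\|_2$, and the Gagliardo--Nirenberg inequality $\|u\|_4^4\le C\|u\|_2^3\|u_x\|_2$ to handle the case $a<0$, one obtains the uniform bound $\|u(t)\|_{H^1}+\|xu(t)\|_2+\|\rho(t)\|_{H^1}+\|\rho_t(t)\|_2\le C_0$.

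To upgrade to a $D(A)$ estimate for $u$, I would rewrite the Schr\"odinger equation as $iu_t=-Au+f$ with $f=-\rho u+a|u|^2 u$, differentiate in $t$ to get $iu_{tt}=-Au_t+f_t$, and pair with $\bar u_t$. Taking imaginary parts, the self-adjointness of $A$ kills $(Au_t,u_t)$, and the real-valued integrands $\int\rho|u_t|^2$ and $\int|u|^2|u_t|^2$ contribute nothing, leaving
\[
\tfrac12\partial_t\|u_t\|_2^2=-\mathrm{Im}\int\rho_t u\bar u_t\,dx+a\,\mathrm{Im}\int u^2\bar u_t^{\,2}\,dx.
\]
The bounds from Step~1 and the Sobolev embedding $\|u\|_\infty\le C\|u\|_{H^1}$ give $\partial_t\|u_t\|_2^2\le C(1+\|u_t\|_2^2)$, and Gr\"onwall yields $\|u_t(t)\|_2\le C(T)$ on $[0,T]$. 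Since $Au=iu_t-f$ with $\|f\|_2$ already controlled, the equivalence~\eqref{(1.8)} then gives $\|u(t)\|_{D(A)}\le C(T)$, and in particular $\|u_{xx}(t)\|_2\le C(T)$.

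Finally, applying $\partial_x$ and $\partial_x^2$ to the wave equation produces linear Klein--Gordon equations for $\rho_x$ and $\rho_{xx}$ with sources $(|u|^2)_x$ and $(|u|^2)_{xx}$, and Step~2 together with Gagliardo--Nirenberg bound $\|(|u|^2)_{xx}\|_2\le C(1+\|u_{xx}\|_2)\le C(T)$. Standard energy estimates for these wave equations, followed by one more Gr\"onwall argument, yield $\|\rho(t)\|_{H^3}+\|\rho_t(t)\|_{H^2}\le C(T)$, and the local theorem then closes the continuation argument. The main obstacle I anticipate is the second step: an estimate of $\|u_t\|_2$ that retained a $\|\rho\|_{H^3}$-level coupling on the right would force a simultaneous Gr\"onwall argument for $\|u\|_{D(A)}$ and $\|\rho\|_{H^3}$; it is crucial that imaginary-part extraction eliminates the $\rho u_t\bar u_t$ contribution (since $\rho|u_t|^2$ is real), decoupling the two estimates into successive linear Gr\"onwall inequalities.
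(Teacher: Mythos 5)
Your proposal is correct and follows essentially the same route as the paper: conservation of mass and energy combined with Gagliardo--Nirenberg for the $H^1$-level bounds, then a Gr\"onwall estimate on $\|u_t\|_2$ obtained by differentiating the Schr\"odinger equation in time (where, as you observe, $\mathrm{Im}\int\rho|u_t|^2\,dx$ and $\mathrm{Im}\int|u|^2|u_t|^2\,dx$ vanish and only $\mathrm{Im}\int\rho_t u\bar{u}_t\,dx$ survives --- a term the paper's displayed identity actually drops, but which is harmless since $\|\rho_t\|_2$ is already controlled), recovery of $\|Au\|_2$ from the equation, and finally higher-order estimates for $\rho$ closed by Gr\"onwall. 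The only cosmetic difference is that the paper performs the $H^2$/$H^3$ estimates on $\rho$ via the Riemann invariants $l=\rho_t+\rho_x$, $r=\rho_t-\rho_x$ and their transport equations, whereas you differentiate the wave equation directly and use standard Klein--Gordon energy estimates; the two are equivalent.
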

In the special case of initial data with compact support, we will prove in Section 4  the following result:
\begin{Theorem} \label {T3} Assuming the hypothesis of Theorem ~\ref{T2}, consider the particular case where
\begin{equation} 
\label{(1.9)} \it{supp} \big\{u_0,\rho_0,\rho_1\big\} \subset D = ] -\theta, \theta [,\quad \theta > 0.
\end {equation}
Then, for each $ t > 0 $ and $ \varepsilon > 0 $  , there exists a $\delta = \delta ( t,\varepsilon,\|u_0\|_{H^1} ) > 0 $, such that
\begin{equation} 
\label{(1.10)} \int _{\R\setminus (D + B( 0, \delta))  } [ | u |^2 + |\rho|^2 + |\rho_x|^2 + |\rho_t|^2 ] (x,t) dx \leq \varepsilon,
\end{equation}
where $B( 0, \delta) = \big \{ x\in \R | |x| < \delta \big \} $.
\end{Theorem}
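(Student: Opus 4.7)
The plan is a two-step argument that exploits the fact that $\lambda=0$ reduces the second equation of~\eqref{(1.1)} to the linear inhomogeneous Klein--Gordon equation $\rho_{tt}-\alpha\rho_{xx}+b\rho=|u|^2$ with finite propagation speed $c=\sqrt{\alpha}$, while the Schr\"odinger component has infinite speed but admits a localized mass estimate. I will first bound the tail mass of $u$ by a cutoff energy estimate, then invoke finite propagation speed for the wave equation, feeding the first bound into the forcing term.

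For Step~1, let $\phi=\phi_\delta\in C^\infty(\R)$ be a smooth cutoff with $\phi=1$ on $\R\setminus(D+B(0,\delta))$, $\phi=0$ on $D+B(0,\delta/2)$, and $\|\phi'\|_\infty\le C/\delta$. Multiplying the first equation of~\eqref{(1.1)} by $\phi^2\bar u$, taking imaginary parts (the real potential $-\rho+a|u|^2+H^2x^2$ drops out), and integrating by parts yields
\begin{equation*}
\tfrac12\frac{d}{dt}\int\phi^2|u|^2\,dx \;=\; 2\int\phi\phi'\,\mathrm{Im}(u_x\bar u)\,dx,
\end{equation*}
hence $\frac{d}{dt}\|\phi u\|_2\le C\delta^{-1}\|u(\cdot,t)\|_{H^1}$. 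Since the initial datum $u_0$ vanishes on the support of $\phi$, time integration together with the uniform-in-$[0,t]$ $H^1$ bound on $u$ (which follows from the a priori estimates used in Theorem~\ref{T2}) gives
\begin{equation*}
\|u(\cdot,s)\|_{L^2(\R\setminus(D+B(0,\delta)))}\;\le\;\frac{C\,s\,\|u\|_{L^\infty([0,t],H^1)}}{\delta},\qquad 0\le s\le t.
\end{equation*}

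For Step~2, I will apply the classical local energy inequality for the Klein--Gordon equation. Set $E_R(s):=\int_{|x|>R}\bigl[\rho_t^2+\alpha\rho_x^2+b\rho^2\bigr](x,s)\,dx$; then a standard multiplier argument on the truncated backward light-cone $\{(y,\tau):|y|>R+c(t-\tau),\,0\le\tau\le t\}$ produces
\begin{equation*}
\sqrt{E_R(t)}\;\le\;\sqrt{E_{R-ct}(0)}\;+\;C\int_0^t\bigl\||u(\cdot,s)|^2\bigr\|_{L^2(|x|>R-c(t-s))}\,ds.
\end{equation*}
Choosing $R=\theta+ct+\delta'$ annihilates the initial term (since $\rho_0,\rho_1$ are supported in $D$), and the forcing is controlled by the one-dimensional Sobolev embedding $\|u\|_\infty\lesssim\|u\|_{H^1}$ together with Step~1, noting that $R-c(t-s)\ge\theta+\delta'$ throughout $[0,t]$. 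Setting $\delta:=ct+\delta'$ and choosing $\delta'$ sufficiently large in terms of $t$, $\varepsilon$, and $\|u_0\|_{H^1}$ makes both the tail $L^2$ mass of $u$ (from Step~1) and $E_R(t)$ (from Step~2) smaller than $\varepsilon/2$, establishing~\eqref{(1.10)}.

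The main technical point I expect is extracting from the proof of Theorem~\ref{T2} a quantitative bound of the form $\sup_{[0,t]}\|u(\cdot,s)\|_{H^1}\le C(t,\|u_0\|_{H^1})$ (together with the initial data of $\rho$) so that the constants in the two steps can be tracked explicitly as functions of $\|u_0\|_{H^1}$; a secondary issue is the careful justification of the cone multiplier argument in the presence of the zeroth-order term $b\rho$, which is benign since $b>0$ allows $b\rho^2$ to be absorbed into the positive-definite energy $E_R$.
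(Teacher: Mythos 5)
Your proposal is correct, and Step 1 is essentially identical to the paper's argument: the paper multiplies the Schr\"odinger equation by $\phi^2\bar u$ with a cutoff $\phi$ vanishing on $D$, equal to $1$ outside $D+B(0,\delta)$ and with $\|\phi_x\|_\infty=1/\delta$, obtaining exactly your bound $\|\phi u(t)\|_2\le\|\phi u_0\|_2+c_0t\|\phi_x\|_\infty$ with $c_0=2\sup_t\|u_x(t)\|_2$ supplied by the conservation laws of Section 3. Where you genuinely diverge is the treatment of $\rho$. The paper does \emph{not} invoke finite propagation speed: it multiplies the wave equation by $\phi^2\rho_t$ with the \emph{same stationary} cutoff, obtains a weighted energy identity for $f_1(t)=\int(\phi\rho_t)^2+(\phi\rho_x)^2+(\phi\rho)^2\,dx$ whose only source is $2\int\phi^2\rho_t|u|^2\,dx$, and closes with a Gronwall-type inequality $f_1^{1/2}(t)\le f_1^{1/2}(0)+t\|u_0\|_2\,g_0(t)$, where $g_0$ carries the $1/\delta$ decay through the Gagliardo--Nirenberg bound on $\|\phi u\|_\infty$; everything is then collected into the single quantity $f(t)=f_1(t)+\|\phi u(t)\|_2^2$ with $f(0)=0$. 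Your light-cone energy estimate for the Klein--Gordon part is a valid alternative (the lateral flux $-\alpha\rho_t\rho_x$ is dominated by $\sqrt{\alpha}$ times the energy density, and $b\rho^2$ only improves positivity), and it buys a cleaner statement for $\rho$: beyond the cone $|x|>\theta+\sqrt{\alpha}\,t+\delta'$ the tail of $\rho$ is controlled \emph{purely} by the leaked Schr\"odinger mass, with no direct $1/\delta$ loss from the wave equation itself. The price is that your $\delta$ must contain the additive term $\sqrt{\alpha}\,t$, whereas the paper's $\delta$ enters only through negative powers in the final bound $f(t)\lesssim t^3/\delta^2+t^3/\delta+t^2/\delta^2$; both yield $\delta=\delta(t,\varepsilon,\|u_0\|_{H^1})$ as claimed. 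Your flagged technical point is not an obstacle: the uniform bound $\sup_{s\le t}\|u(s)\|_{H^1}\le c_1$ is exactly estimate \eqref{(3.3)}, though strictly speaking $c_1$ depends on the full initial energy $E(0)$ (hence on $\rho_0,\rho_1$ as well), an imprecision shared by the theorem's statement and by the paper's own proof.
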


The proof of this result follows a technique introduced in ~\cite{Cor} in the case of the nonlinear Schr\"{o}dinger equation.\\
\indent In Section 5, which contains the main result in the paper, we study the existence and possible partial orbital stability of the standing waves for the system \eqref {(1.1)} with $a=-1$ (attractive case) and  $\lambda \geq 0 $.
These solutions are of the form
\begin{equation} 
\label{(1.11)} ( e^{i\mu t} u(x), \rho (x) ), \,  \mu \in \R,
\end{equation}
and the system ~\eqref{(1.1)} takes the aspect (we fix $\alpha=1$, without loss of generality):
\begin{equation} 
 \label{(1.12)}
  \begin{array}{ll}
    \left\{
    \begin{array}{l}
      u_{xx}  - H^{2} x^{2} u +  |u|^{2}u + \rho u = \mu u \eqskip
     - \rho_{xx}- \lambda (\rho_x^3)_x + b\rho = |u|^{2},
    \end{array}
    \right.
    &   x\in\R .
  \end{array}
\end{equation}
 We can rewrite this system as a scalar equation
\begin{equation} 
 \label{(1.14)}
  u_{xx}  - H^{2} x^{2} u +  |u|^{2}u + \rho (|u|^2) u = \mu u,
\end{equation}
where $\rho (f)$ is the solution to $ - \rho_{xx}- \lambda (\rho_x^3)_x + b\rho = f$. It is not difficult to prove that if $f \in L^2$, there exists a unique $\rho \in H^2$. This allows for instance to prove that $\rho (|u|^2) u^2 \in L^1$ provided that $u\in X_A$. Now, to find nontrivial solutions of this equation belonging to $D(A)$, the domain of the linear operator defined by~\eqref {(1.5)}, we will closely follow the technique introduced in ~\cite{HS et al} for the case of the Gross-Pitaevskii equation. More precisely, we consider the energy functional defined in $X_A$ by
(with $\int . dx = \int_\R . dx$):\\
\begin{equation} 
\begin{split}
\label{(1.15)} \mathcal{E} (u) = \frac {1}{2} \int |u_x|^2 dx + \frac {1}{2} H^2 \int x^2|u|^2 dx \\ - \frac {1}{4} \int |u|^4 dx 
- \frac {1}{2} \int \rho (|u|^2) |u|^2 dx, \, u\in X_A,
\end{split}
\end {equation}
 and we look to solve the following constrained minimization  problem for a prescribed $c>0$:
\begin{equation} 
\label{(1.16)} \mathcal {I}_c = \inf \big \{ \mathcal{E} (u), u\in X_A, \,  \text{real}, \int |u(x)|^2 dx = c^2 \big \}.
\end {equation}
We start by proving the following result which corresponds to Lemma~1.2 in ~\cite {HS et al}.
\begin{Theorem} \label{T4} We have: \\
i) The energy functional $\mathcal{E}$ is $C^1$ on \, $X_A$ real.\\
ii) The mapping $c \rightarrow \mathcal{I}_c $ is continuous.\\
iii) Any minimizing sequence of $\mathcal{I}_c$ is relatively compact in $X_A$ and so, if $\big\{u_n \big\}_{n\in\mathbb{N}} \subset  X_A$ is a corresponding minimizing sequence,then there exists $u\in X_A$ such that $\|u\|^2_2 = c^2$ and $\lim_ {n\rightarrow +\infty} u_n = u$ in $X_A$. Moreover $u(x)= u(|x|)$ is radial decreasing and satisfies ~\eqref{(1.14)} for a certain $\mu \in \R$.
\end{Theorem}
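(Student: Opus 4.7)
The plan is to follow the direct-method scheme of~\cite{HS et al}, whose key ingredient is the compact embedding $X_A\hookrightarrow L^q(\R)$ for $2\le q<\infty$ provided by the confining potential $H^2x^2$; this replaces the usual concentration--compactness machinery. For part (i), I would verify the $C^1$ character of each of the four summands of $\mathcal{E}$. The two quadratic terms are trivially $C^1$ on $X_A$, and the $L^4$ term is $C^1$ on $H^1\supset X_A$ by Sobolev embedding. For the nonlocal term $\tfrac{1}{2}\int\rho(|u|^2)|u|^2\dx$, I would first show that the solution map $f\mapsto\rho(f)$ of the equation $-\rho_{xx}-\lambda(\rho_x^3)_x+b\rho=f$ is of class $C^1$ from $L^2$ into $H^2$. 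When $\lambda=0$ this is the resolvent $(-\partial_x^2+b)^{-1}$; when $\lambda>0$ the underlying operator is strictly monotone and the implicit function theorem applies. Composing with the $C^1$ map $u\mapsto|u|^2$ from $H^1\supset X_A$ into $L^2$ gives the claim.

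For part (iii), coercivity follows from Gagliardo--Nirenberg $\|u\|_4^4\le C\|u_x\|_2\|u\|_2^3$ together with the $L^2$-estimate $\|\rho(f)\|_2\le b^{-1}\|f\|_2$ (obtained by testing the $\rho$-equation with $\rho$ itself): on the constraint $\|u\|_2=c$ both concave terms are bounded by $C c^3\|u_x\|_2$, which is absorbed by $\tfrac{1}{2}\|u_x\|_2^2$. Hence $\mathcal{I}_c>-\infty$ and every minimizing sequence $\{u_n\}$ is bounded in $X_A$. I would extract a subsequence with $u_n\rightharpoonup u$ in $X_A$; the compact embedding yields $u_n\to u$ strongly in $L^2\cap L^4$, whence $\|u\|_2^2=c^2$ and $\int|u_n|^4\dx\to\int|u|^4\dx$, while the $L^2$-continuity of $f\mapsto\rho(f)$ gives $\int\rho(|u_n|^2)|u_n|^2\dx\to\int\rho(|u|^2)|u|^2\dx$. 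Combined with weak lower semicontinuity of the two positive quadratic terms, this forces $\mathcal{E}(u)\le\mathcal{I}_c$, hence equality; running the estimate backwards then gives $\|u_{n,x}\|_2\to\|u_x\|_2$ and $\|xu_n\|_2\to\|xu\|_2$, upgrading weak to strong convergence in $X_A$. The constrained Euler--Lagrange equation supplies~\eqref{(1.14)} with a Lagrange multiplier $\mu\in\R$.

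For the radial-decreasing claim and for (ii), I would apply the Schwarz symmetric-decreasing rearrangement $u\mapsto u^*$, which preserves all $L^p$ norms, decreases $\|u_x\|_2$ by P\'olya--Szeg\H{o}, and decreases $\int x^2 u^2\dx$ by Hardy--Littlewood (since $x^2$ is symmetric increasing). In the semilinear case $\lambda=0$, the nonlocal term reads $\int|u|^2(G*|u|^2)\dx$ with the symmetric-decreasing kernel $G(x)=\tfrac{1}{2\sqrt{b}}e^{-\sqrt{b}|x|}$, so Riesz rearrangement yields $\mathcal{E}(u^*)\le\mathcal{E}(u)$ and the minimizer may be chosen radial decreasing. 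Continuity of $c\mapsto\mathcal{I}_c$ follows from a standard scaling: a near-minimizer at level $c$ rescales into a competitor at level $c'$, yielding $|\mathcal{I}_c-\mathcal{I}_{c'}|\to 0$ as $c'\to c$. The main obstacle is the quasilinear regime $\lambda>0$: no direct Riesz-type inequality compares $\int\rho(|u|^2)|u|^2\dx$ with $\int\rho(|u^*|^2)|u^*|^2\dx$, so one must argue either via the nonlinear variational principle characterizing $\rho(f)$ together with uniqueness of its positive solution, or via a moving-planes argument on~\eqref{(1.14)} to extract radial symmetry directly from the Euler--Lagrange equation.
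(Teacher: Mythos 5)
Your argument for the coercivity, the extraction of a strongly convergent subsequence, and the identification of the limit as a constrained minimizer is essentially the paper's own proof: the same Gagliardo--Nirenberg absorption of the quartic and nonlocal terms after the a priori bound $\|\rho(f)\|_2\le C\|f\|_2$, the same use of the compact embedding $X_A\hookrightarrow L^q(\R)$ in place of concentration--compactness, and the same upgrade from weak to strong convergence by matching the $X_A$ norms once $\mathcal{E}(u_n)\to\mathcal{E}(u)$. Your treatment of (i) and (ii) is in fact more explicit than the paper's, which defers these points to Lemma~1.2 of \cite{HS et al}; and your remark that the $L^2$-continuity of $f\mapsto\rho(f)$ in the quasilinear case rests on the monotonicity of $v\mapsto v^3$ is exactly the mechanism behind the paper's estimate \eqref{conrho1}.

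The genuine gap is the one you flag yourself: the radial decreasing property of the minimizer when $\lambda>0$. Since this is part of the statement of (iii), writing that ``one must argue either via the variational principle or via moving planes'' is not a proof; moreover, a moving-plane argument on \eqref{(1.14)} would only address solutions of the Euler--Lagrange equation and would still require positivity and a careful sign analysis of the nonlocal quasilinear coupling, none of which you carry out. The paper closes this gap by a different device: it invokes the extended Hardy--Littlewood inequality of Hajaiej, \cite[Theorem 6.3]{H}, namely $\int G(v)\dx\le\int G(v^\star)\dx$, applied to $G(s)=\rho(s^2)s^2$, verifying that $g=G'$ maps $\R_+$ into $\R_+$ by the maximum principle applied to the linearized $\rho$-equation and that $g$ has the required growth $|g(s)|\le K(s+s^l)$ by elliptic regularity. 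This yields $\int\rho(|u|^2)|u|^2\dx\le\int\rho(|u^\star|^2)|u^\star|^2\dx$ for all $\lambda\ge0$ in one stroke and, combined with the strict inequality $\int x^2|u^\star|^2\dx<\int x^2|u|^2\dx$ unless $u=u^\star$ (quoted from \cite{HS et al}), gives the radial decreasing conclusion. For $\lambda=0$ your Riesz-rearrangement argument with the symmetric decreasing kernel $e^{-\sqrt{b}|x|}/(2\sqrt{b})$ is correct and coincides with the paper's Remark at the end of Section~5, but to make the proposal complete in the general case you must either import Hajaiej's inequality as the paper does or actually execute one of your two suggested alternatives.
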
 
To prove this result we follow the ideas in ~\cite{HS et al} and introduce the real space
$ \tilde{X}_A = \big\{w=(u,v) \in X_A\times X_A\big\}$, for real $u$ and $v$, with norm
\begin{equation}
\label{(1.17)} \|w\|^2_{\tilde{X}_A}= \|u\|^2_{X_A} + \|v\|^2_{X_A} , u, v \in X_A, 
\end{equation}
and observe that if $u =u_1 + i u_2$,with $u_1=\mathcal{R}e\, u,  u_2=\mathcal{I}m\, u$, the equation ~\eqref {(1.14)} can be written in the system form:
\begin{equation} 
  \label{(1.18)}
  \begin{array}{ll}
    \left\{
    \begin{array}{l}
       u_{1xx} -   H^{2} x^{2} u_1 + |u|^{2}u_1 + \rho(|u|^2)u_1= \mu u_1 \eqskip
       u_{2xx} -   H^{2} x^{2} u_2 + |u|^{2}u_2 + \rho (|u|^2) u_2= \mu u_2
        \end{array}
    \right.
    &   x\in\R,
  \end{array}
\end{equation}
with $w=(u_1,u_2)\in \tilde{X}_A, u_1=\mathcal{R}e\, u,  u_2=\mathcal{I}m\, u$.\\
In the new space $\tilde{X}_A$, the functional defined in ~\eqref{(1.15)} takes the form, for $w=(u,v)\in \tilde{X}_A, |w|^4=(|u_1|^2 + |u_2|^2)^2,$
\begin{equation}
\begin{split}
\label{1.19} \mathcal{\tilde E} (u) = \frac {1}{2} \int |w_x|^2 dx + \frac {1}{2} H^2 \int x^2|w|^2 dx \\ - \frac {1}{4} \int |w|^4 dx 
- \frac {1}{2} \int \rho (|w|^2) |w|^2 d\xi, \, w\in \tilde{X}_A,
\end{split}
\end{equation}
and, for all $c>0$,we introduce
\begin{equation}
\label{(1.20)}  \mathcal{\tilde I}_c= \inf \big \{ \mathcal{\tilde E} (w), w \in \tilde{X}_A ,  \int |w(x)|^2 dx = c^2 \big \},
\end{equation}
and the sets 
$$  \mathcal{ W}_c = \big \{ u\in X_A, \| u \|^2_2= c^2,\mathcal {I}_c = \mathcal{E} (u), u>0 \big \},$$
 $$\mathcal{ Z}_c = \big \{ w\in \tilde{X}_A, \| w \|^2_2= c^2,\mathcal {\tilde I}_c = \mathcal{\tilde E} (w) \big \}.$$
 Following ~\cite{CL} and ~\cite{HS et al}, we introduce the following definition:\\
 
\noindent \textbf{Definition}: The set $\mathcal{Z}_c$ is said to be stable if $\mathcal{Z}_c \neq \varnothing$ and
  for all $\varepsilon > 0$, there exists $\delta > 0$ such that, for all $w_0=({u_1}_0,{u_2}_0) \in \tilde{X}_A $, we have, for all $t\geq 0$,
 $$  \inf_{w \in  \mathcal{ Z}_c} \|w_0 - w\|_{\tilde{X}_A} <\delta\Longrightarrow \inf_{w \in  \mathcal{ Z}_c}\|\psi(.,t) - w\|_{\tilde{X}_A} <\varepsilon , $$
 where  $\psi(x,t) = (u_1(x,t),u_2(x,t))$ corresponds to the solution $ u(x,t) = u_1(x,t) + i u_2(x,t) $ of the first equation in the Cauchy problem \eqref{(1.1)},\eqref{(1.2)},  with initial data $ u_0(x) = {u_1}_0(x) + i {u_2}_0(x) $ and where $\rho(x,t)= \rho(|u(x,t)|^2)(x,t)$ satisfies
$$ - \rho_{xx}- \lambda (\rho_x^3)_x + b\rho = |u(.,t)|^2 .$$ 

\indent This corresponds to the hypothesis $\rho_{tt} \simeq 0$, cf. ~\cite{Baqer et al}, ~\cite{Baqer et al 1} and ~\cite{Martins et al} . The local existence and uniqueness in $X_A$ to the corresponding Cauchy problem for the  Schr\"{o}dinger equation is a consequence of Theorem $3.5.1$ in ~\cite{C}. It is easy to get the global existence of such solution $\psi (t)$ if their initial data is closed to $\mathcal{Z}_c$. Indeed, denote by $T$ the maximal time of existence and suppose that $\mathcal{Z}_c$ is stable at least up to the time $T$. So, using the stability at time $T$, we see that $\psi (T)$ is uniformly bounded in $\tilde{X}_A$. Therefore, we can apply the local existence result for initial data $\psi(T)$. This contradicts the maximality of $T$ and yields to the global existence.   \\
Proceeding as in the proof of Theorem \ref{T3} (see in particular \eqref{conrho1}), we can show that 
$$\|\rho (|\psi (t)|^2) - \rho (|w|^2) \|_{H^1}\leq C\|\psi (t) +w\|_{L^2} \|\psi (t) - w \|_{\tilde {X_A}}, $$
where $C$ is a constant not depending on $t$. So, if $w$ is stable, we derive, in the conditions of the definition,
 \begin{equation}
\label{(1.21)} \inf_{w \in  \mathcal{ Z}_c} \|\rho(.,t) - \varphi\|_{H^1} < c_1( \|u_0\|_2 +c)\varepsilon.
 \end{equation}  
 We point out that, if $w=(u_1,u_2)\in  \mathcal{ Z}_c$, then there exists a Lagrange multiplier $\mu\in\R$ such that $w$ satisfies ~\eqref{(1.18)}, that is
 $u = u_1 + i u_2$ satisfies ~\eqref{(1.14)}.\\
 \indent We will prove the following result which is a variant of Theorem 2.1 in ~\cite{HS et al}:
\begin{Theorem} \label{T5} The functional $\tilde{\mathcal{E}}$ is $C^1$ in  $\tilde{X}_A$  and  we have\\
i) For all $c>0, \mathcal{I}_c = \mathcal{\tilde I}_c , \mathcal{Z}_c \neq\varnothing$ and $\mathcal{Z}_c$ is stable.\\
ii) For all $w\in \mathcal{Z}_c, |w| \in \mathcal{W}_c$.\\
iii) $\mathcal{Z}_c = \big\{ e^{i\theta}u, \theta\in \R\big\}$, with $u$ real being a minimizer of \eqref{(1.16)}.
\end{Theorem}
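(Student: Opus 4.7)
The plan is to follow the Cazenave--Lions scheme as adapted in \cite{HS et al}, using the one-dimensional diamagnetic inequality as the bridge between the real minimization of Theorem~\ref{T4} and the complex one. The $C^1$ character of $\tilde{\mathcal{E}}$ on $\tilde{X}_A$ is handled exactly as for $\mathcal{E}$ in Theorem~\ref{T4}(i), simply replacing $|u|^2$ by $|w|^2=u_1^2+u_2^2$; in particular the nonlocal term $\int \rho(|w|^2)|w|^2\,dx$ is controlled using the continuity of $f\mapsto\rho(f)$ from $L^2$ into $H^2$ established earlier.

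To prove $\mathcal{I}_c=\mathcal{\tilde I}_c$, the inequality $\mathcal{\tilde I}_c\leq \mathcal{I}_c$ is immediate via the embedding $u \mapsto (u,0)$, which satisfies $\tilde{\mathcal{E}}(u,0)=\mathcal{E}(u)$. For the reverse inequality, given $w=(u_1,u_2)\in\tilde{X}_A$ with $\|w\|_2=c$, I would invoke the one-dimensional diamagnetic inequality: $|w|=\sqrt{u_1^2+u_2^2}$ lies in $X_A$ and satisfies $\bigl(\partial_x|w|\bigr)^2 \leq u_{1x}^2+u_{2x}^2 = |w_x|^2$ pointwise, by Cauchy--Schwarz on $\{|w|>0\}$. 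Since the potential, quartic and nonlocal terms of $\tilde{\mathcal{E}}$ depend on $w$ only through $|w|$, one obtains $\mathcal{E}(|w|)\leq \tilde{\mathcal{E}}(w)$, and since $\||w|\|_2=\|w\|_2$, passing to the infimum yields $\mathcal{I}_c\leq\mathcal{\tilde I}_c$.

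For the structure of $\mathcal{Z}_c$, let $\{w_n\}$ be a minimizing sequence for $\mathcal{\tilde I}_c$. The previous step shows $\{|w_n|\}$ is a minimizing sequence for $\mathcal{I}_c$, so by Theorem~\ref{T4}(iii) it converges in $X_A$ along a subsequence to a positive, radial decreasing minimizer $u$. Boundedness of $\{w_n\}$ in $\tilde{X}_A$ gives, up to a further subsequence, $w_n\rightharpoonup w$ weakly; componentwise the compact embedding $X_A\hookrightarrow L^2$ provides $u_{i,n}\to u_i$ strongly in $L^2$, forcing $|w|=u$ and $\|w\|_2=c$. Weak lower semicontinuity and admissibility then yield $\tilde{\mathcal{E}}(w)=\mathcal{\tilde I}_c$, so $w\in\mathcal{Z}_c$ (proving $\mathcal{Z}_c\neq\varnothing$) and $|w|=u\in\mathcal{W}_c$, giving (ii). The equality $\tilde{\mathcal{E}}(w)=\mathcal{E}(|w|)$ forces pointwise saturation of the diamagnetic inequality; writing $w=ue^{i\theta}$, valid since $u>0$, the expansion $|w_x|^2=u_x^2+u^2\theta_x^2$ gives $\theta_x\equiv 0$, so the phase is constant, proving (iii). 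The same argument applied to an arbitrary minimizing sequence, combined with strong $L^q$ convergence of $|w_n|$ (yielding convergence of the nonlinear terms) and of $\int x^2|w_n|^2$, upgrades weak to strong $\tilde{X}_A$ convergence, establishing relative compactness.

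Finally, the stability assertion in (i) follows by the standard contradiction argument of \cite{CL}: if $\mathcal{Z}_c$ were unstable, sequences of initial data $w_{0,n}$ approaching $\mathcal{Z}_c$ and times $t_n\geq 0$ would produce solutions $\psi_n(\cdot,t_n)$ bounded away from $\mathcal{Z}_c$. Mass and energy conservation for the reduced Schr\"odinger equation with the quasistatic coupling give $\|\psi_n(\cdot,t_n)\|_2\to c$ and $\tilde{\mathcal{E}}(\psi_n(\cdot,t_n))\to \mathcal{\tilde I}_c$; a mild rescaling then produces a minimizing sequence for $\mathcal{\tilde I}_c$, contradicting the relative compactness just established. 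The main obstacle is precisely the relative-compactness step: decoupling the two real components from the convergence of the modulus and extracting the constant phase from the saturation of the diamagnetic inequality, where the compact embedding $X_A\hookrightarrow L^q$ and the positivity of the real minimizer from Theorem~\ref{T4} play the essential role.
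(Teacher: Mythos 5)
Your proposal is correct and follows essentially the same route as the paper: reduction to the real problem via the modulus $|w_n|$ and the (quantified) diamagnetic inequality, invocation of Theorem~\ref{T4} for the convergence of $|w_n|$, upgrade from weak to strong $\tilde{X}_A$ convergence through convergence of norms, saturation of the diamagnetic inequality to extract the constant phase, and the Cazenave--Lions argument for stability. No substantive differences from the paper's proof.
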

The proof of this result is similar to the proof of Theorem 2.1 in ~\cite{HS et al}. 
We repeat some parts of the original proof for sake of completeness. Next, in Section 6, also following closely \cite{HS et al}, we prove a bifurcation result asserting in particular that all solutions of the minimisation problem \eqref{(1.16)} belongs to a bifurcation branch starting from the point $(\lambda_0 ,0)$ (in the plane $(\mu ,u)$) where $\lambda_0$ is the first eigenvalue of the operator $-\partial_{xx} +H^2 x^2$. 

\begin{Proposition}
\label{propbif}
The point $(\lambda_0 ,0)$ is a bifurcation point for \eqref{(1.14)} in the plane $(\mu ,u)$ where $-\mu \in \R^+$ and $u\in X_A$. The branch issued from this point is unbounded in the $\mu$ direction (it exists for all $-\mu>\lambda_0$). Moreover solutions to \eqref{(1.14)} belonging to this branch are in fact minimizers of problem \eqref{(1.16)}.
\end{Proposition}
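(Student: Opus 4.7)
\medskip

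\noindent\textbf{Proof proposal.} The plan is to apply the Crandall--Rabinowitz bifurcation-from-a-simple-eigenvalue theorem locally, upgrade to a global branch via Rabinowitz, and finally identify that branch with the set of minimizers of \eqref{(1.16)}. Set up the abstract equation by defining, for $\nu:=-\mu>0$ and $u\in X_A$ real,
\begin{equation*}
F(\nu,u) = -u_{xx}+H^2x^2 u - |u|^2 u - \rho(|u|^2)\,u - \nu u,
\end{equation*}
viewed as a $C^1$ map $F:\R\times X_A\to X_A^{*}$ (or, after restricting, into $L^2$). Clearly $F(\nu,0)=0$ for every $\nu$, and the partial linearization at the trivial branch is
\begin{equation*}
D_u F(\nu,0)\,\varphi = -\varphi_{xx}+H^2x^2\varphi-\nu\varphi,
\end{equation*}
because both cubic-type terms $|u|^2u$ and $\rho(|u|^2)u$ vanish to higher order at $u=0$ (here I use that $f\mapsto\rho(f)$ is continuous $L^2\to H^2$, as recalled just after \eqref{(1.14)}).

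\medskip

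Since $-\partial_{xx}+H^2x^2$ is the quantum harmonic oscillator on $\R$, its spectrum is discrete with simple eigenvalues; in particular $\lambda_0$ is simple, with a strictly positive Gaussian eigenfunction $\varphi_0\in D(A)\subset X_A$, and the range of $D_u F(\lambda_0,0)$ is the $L^2$-orthogonal complement of $\varphi_0$. The transversality condition
\begin{equation*}
D_\nu D_u F(\lambda_0,0)\,\varphi_0 = -\varphi_0 \notin \mathrm{Range}\,D_uF(\lambda_0,0)
\end{equation*}
is therefore satisfied. Crandall--Rabinowitz then yields a $C^1$ local branch $\{(\nu(s),u(s)):|s|<s_0\}$ with $\nu(0)=\lambda_0$, $u(s)=s\varphi_0+o(s)$. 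I would then invoke Rabinowitz's global bifurcation theorem to extend this local branch to a connected component $\mathcal{C}\subset(\lambda_0,\infty)\times X_A$ which is either unbounded or returns to another trivial point $(\lambda_j,0)$.

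\medskip

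To rule out the second alternative and show unboundedness specifically in the $\nu$-direction, I proceed as in \cite{HS et al}. Testing the equation against $u$ and using $\int\rho(|u|^2)|u|^2 \geq 0$ (Positivity of the Green function for $-\partial_{xx}-\lambda(\cdot)_x^3+b$ on $\R$, from elliptic comparison), together with $\int|u_x|^2+H^2x^2|u|^2\geq\lambda_0\|u\|_2^2$, gives $\nu\|u\|_2^2\geq\lambda_0\|u\|_2^2-\int|u|^4-\int\rho(|u|^2)|u|^2$, while a Gagliardo--Nirenberg/$L^4$ control in dimension one prevents finite-$\nu$ blow-up of the branch in $X_A$ as long as the $L^2$ mass stays bounded; a bootstrap then forces $\nu\to\infty$ along $\mathcal{C}$. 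Since the bifurcation branch consists of \emph{positive} solutions close to $0$ (by the sign of $\varphi_0$ and continuity), and since positivity is preserved along $\mathcal{C}$ by a continuity and strong maximum principle argument, each point of $\mathcal{C}$ gives a positive solution to \eqref{(1.14)}.

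\medskip

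For the last assertion, given $c>0$, Theorem \ref{T4}(iii) produces a real, positive, radially decreasing minimizer $u_c\in\mathcal{W}_c$ solving \eqref{(1.14)} for some multiplier $\mu_c$, with $-\mu_c>\lambda_0$ (the strict inequality comes from testing with $\varphi_0$ and using the strict negativity of the $L^4$ and $\rho(|u|^2)|u|^2$ contributions). Uniqueness of the positive radial decreasing solution at fixed $-\mu$ (established by ODE shooting / the standard Coffman--Kwong-type argument adapted as in \cite{HS et al}) then implies that $u_c$ must coincide with the unique positive solution lying on the global branch at parameter $\nu=-\mu_c$. The main obstacles I expect are (a) justifying that the branch is unbounded in $\nu$ rather than only in $\|u\|_{X_A}$, which requires the a~priori estimates sketched above together with compactness in $X_A\hookrightarrow L^q$ from Section 1, and (b) the uniqueness step identifying minimizers with branch points, for which the nonlocal term $\rho(|u|^2)u$ complicates the classical ODE uniqueness proof and must be handled by exploiting monotonicity of $f\mapsto\rho(f)$.
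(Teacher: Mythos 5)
Your overall skeleton (bifurcation from the simple eigenvalue $\lambda_0$ of the harmonic oscillator, plus uniqueness of positive solutions at fixed $\mu$ to identify minimizers with branch points) matches the paper's, but the two steps you defer are precisely where the paper does all the work, and as written your argument has genuine gaps there. First, your claim that the global Rabinowitz continuum is unbounded \emph{in the $\nu$-direction} rests on ``a Gagliardo--Nirenberg/$L^4$ control \dots; a bootstrap then forces $\nu\to\infty$'': you never show that along the branch bounded $\nu$ forces bounded $X_A$-norm (or bounded mass), so the alternative ``unbounded in $\|u\|_{X_A}$ at finite $\nu$'' is not excluded, nor is the alternative of the continuum returning to some $(\lambda_j,0)$ rigorously ruled out (positivity preservation along the continuum is asserted, not proved). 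The paper avoids this entirely: it invokes only a \emph{local} simple-eigenvalue bifurcation statement, and instead proves (Proposition \ref{prop3}, via the Pohozaev identity of Lemma \ref{lempoho} and the test function $c\varphi_0$) that $\mu_c\to-\lambda_0$ and $\|(u_c)_x\|_2^2\le Mc^2$ as $c\to 0$, which places the small-mass minimizers on the local branch; the identification for all $c$ and the unboundedness in $\mu$ then follow from uniqueness. Your proposal contains no substitute for Proposition \ref{prop3}.

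Second, the uniqueness of the positive radial decaying solution at fixed $\mu$ --- which both your route and the paper's need --- is exactly the point you flag as ``obstacle (b)'' and leave unresolved. The paper's Proposition \ref{propuni} adapts the Hirose--Ohta shooting/Wronskian argument and must establish, via maximum and comparison principles for the nonlocal operator, that $u\mapsto\rho(|u|^2)u$ is convex and that $\rho(|u|^2)-\rho(|\eta|^2)<0$ when $u<\eta$; ``exploiting monotonicity of $f\mapsto\rho(f)$'' is the right idea but is not carried out, and without it your final step identifying $u_c$ with ``the unique positive solution lying on the global branch'' is unsupported. (Your local Crandall--Rabinowitz setup and the transversality computation are fine and consistent with the paper's appeal to a standard bifurcation theorem.)
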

As already mentioned, the proof of this proposition follows closely the one of \cite[Theorem 3.1]{HS et al}. An important ingredient which has also independent interest is the following uniqueness result.

\begin{Proposition}
\label{propuni}
There exists a unique radial positive solution to \eqref{(1.14)} such that $\lim_{r\rightarrow \infty} u(r)=0$.
\end{Proposition}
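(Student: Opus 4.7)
The plan is to adapt the Kwong/Sturm uniqueness argument used in \cite[Theorem 3.1]{HS et al} for the Gross-Pitaevskii equation, the main new difficulty being the nonlocal term $\rho(|u|^2)$. First, I would reduce to an ODE: by radial (even) symmetry, writing $r=|x|$, any candidate solution satisfies
\begin{equation*}
u''(r) - H^2 r^2 u + u^3 + \rho(u^2)\, u = \mu u, \qquad r>0, \quad u'(0)=0, \quad u(r)\to 0.
\end{equation*}
Because of the harmonic confinement, such a $u$ decays at Gaussian rate (once $u$ is small, the equation is dominated by $u'' \sim H^2 r^2 u$), and the standard bootstrap based on \eqref{(1.8)} places $u$ in $D(A)$.

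Second, I would establish a comparison principle for the nonlocal map $u\mapsto \rho(u^2)$. Since $\rho(f)$ solves $-\rho_{xx} - \lambda(\rho_x^3)_x + b\rho = f$ with $b>0$ and the nonlinearity $s\mapsto s^3$ monotone, a monotone-operator / maximum-principle argument (subtracting the equations and testing against the positive part) yields: $0\le f_1\le f_2$ implies $0\le \rho(f_1)\le \rho(f_2)$ pointwise. This is the key tool allowing a comparison-type analysis despite the nonlocality.

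Third, suppose by contradiction that two distinct positive decaying radial solutions $u_1,u_2$ exist, with Lagrange multipliers $\mu_1,\mu_2$. For $\mu_1=\mu_2=:\mu$, I would rewrite the equations as $-u_i'' + V_i u_i = 0$ with effective potentials $V_i(r) = H^2 r^2 - u_i^2 - \rho(u_i^2) + \mu$ and perform a Sturm-type comparison: assuming without loss of generality $u_1(0) \le u_2(0)$, the comparison principle for $\rho$ propagates the inequality $u_1 \le u_2$ through a Wronskian analysis of $W = u_1 u_2' - u_2 u_1'$, whose sign changes are controlled by $V_2-V_1 \le 0$, forcing $u_1\equiv u_2$. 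The case $\mu_1\ne\mu_2$ is ruled out via Proposition \ref{propbif}: both solutions must lie on the single bifurcation branch issued from $(\lambda_0,0)$, along which $\|u\|_2$ (and hence $u$ itself, up to symmetry) is uniquely parameterized by $-\mu>\lambda_0$.

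The main obstacle is precisely the nonlocal term $\rho(u^2)$, which prevents a purely pointwise ODE argument; the whole proof rests on the monotonicity of $u\mapsto\rho(u^2)$ and on quantifying how an ordering of initial data at $r=0$ propagates through the coupled pair $(u,\rho(u^2))$, so that the effective potential $V[u]$ remains monotone in $u$ along the whole half-line.
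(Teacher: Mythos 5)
Your overall instincts (reduce to a radial ODE, exploit a maximum/comparison principle for the nonlocal map $u\mapsto\rho(u^2)$, run a Wronskian argument) point in the right direction, and the paper does indeed rely on the monotonicity and convexity properties of $\rho$ obtained via the maximum principle. But the central step of your sketch does not close. You claim that the ordering $u_1(0)\le u_2(0)$ propagates, via the comparison principle for $\rho$ and a Sturm argument with $V_2-V_1\le 0$, to a global ordering $u_1\le u_2$, forcing $u_1\equiv u_2$. The paper's proof (adapting Hirose--Ohta) establishes the opposite as its first step: if the two solutions never crossed, the Wronskian $\psi' u-u'\psi$ (with $\psi=\eta-u$) would converge to a nonzero constant as $r\to\infty$, whereas decay forces it to $0$; hence two distinct positive decaying solutions \emph{must} intersect at some $\gamma_1$. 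Past that crossing, the sign of $V_2-V_1=-(u_2^2-u_1^2)-(\rho(u_2^2)-\rho(u_1^2))$ flips, so your Sturm comparison loses control exactly where the argument has to do its work. The ingredients you are missing are: (i) an auxiliary solution $\xi$ of the inhomogeneous linearized equation (morally $\partial_r u$); (ii) the convexity of the full nonlinearity $s\mapsto s^3+\rho(s^2)s$, which the paper proves by maximum-principle arguments applied to the equations satisfied by $\partial_u\rho(|u|^2)$ and $\partial_{uu}\rho(|u|^2)$, and which makes the comparison function $\chi$ positive between consecutive crossings; and (iii) the resulting three-step scheme --- the solutions intersect, they intersect exactly once, and then the one starting higher must change sign, contradicting positivity.

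A second problem: you dispose of the case $\mu_1\neq\mu_2$ by invoking Proposition \ref{propbif}, but that proposition is proved in the paper \emph{using} Proposition \ref{propuni} (uniqueness is precisely what shows the set of minimizers lies on the bifurcating branch), so this step is circular. In any case the statement is uniqueness for a fixed spectral parameter, which is what the shooting comparison of two solutions $u(\cdot,\alpha_1)$ and $u(\cdot,\tilde{\alpha}_1)$ with $\alpha_1<\tilde{\alpha}_1$ of the \emph{same} equation addresses; no separate argument over varying $\mu$ is needed.
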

The proof of this proposition is strongly inspired by \cite{Hirose}.

\indent Finally, in Section 7 we present some numerical simulations illustrating the behaviour of the standing waves according to the intensity of the magnetic field $\mathbf H$, and also the limit as the Lagrange multiplier $-\mu$ approaches the bifurcation value $\lambda_0$.

\section{Local existence in the general case} \label{Local existence}

 In order to prove Theorem 1, let us introduce the Riemann invariants associated to the second equation in the system ~\eqref{(1.1)},
\begin{equation}
\label{ 2.1}l=w+\int_0^v\sqrt{\alpha+3\lambda\xi^2}d\xi\quad\textrm{and}\quad r=w-\int_0^v\sqrt{\alpha+3\lambda\xi^2}d\xi,
 \end{equation}
 where $w=\rho_t, v=\rho_x$.
 We derive
 $$ l-r=2\int_0^v\sqrt{\alpha+3\lambda\xi^2}d\xi$$ 
 $$=v\sqrt{\alpha+3\lambda v^2}+\frac {1}{\sqrt{3\lambda}}\,\textrm{arcsinh}(\sqrt{3\lambda} v),\quad w=\frac {l+r}2.$$
 Noticing that
 $$\displaystyle f(v)=v\sqrt{\alpha+3\lambda v^2}+\frac 1{\sqrt{3\lambda}}\,\textrm{arcsinh}(\sqrt{3\lambda} v)$$ 
 is one-to-one and smooth, we have $v=f^{-1}(l-r)=v(l,r)$ and, for classical solutions,
 the Cauchy problem ~\eqref{(1.1)},\eqref{(1.2)} is equivalent to the system
 \begin{equation}
\label{(2.2)}
\left\{
\begin{array}{lllll}
iu_t+u_{xx}- H^2x^2u=-\rho u+ a |u|^2u\\
\\
\rho_t=\frac 12 (l+r)\\
\\
l_t-\sqrt{\alpha+3\lambda v^2}l_x= - b \rho + |u|^2\\
\\
r_t+\sqrt{\alpha+3\lambda v^2}r_x= - b \rho + |u|^2\\

\end{array}
\right.
\end{equation}
with initial data (cf. ~\eqref{(1.5)},\eqref{(1.8)}),
\begin{equation}
\begin{split}
\label{(2.3)}
u(.,0)=u_0\in D(A)=\big\{ u \in H^2{\R} | x^2 u \in L^2 (\R) \big\} , \\ \rho(.,0)=\rho_0\in H^3(\R), l(.,0)=l_0\in H^2(\R),r(.,0)=r_0\in H^2(\R).
\end{split}
\end{equation}
In order to apply Kato's theorem (cf.~\cite[Thm. 6]{K}) to obtain the existence and uniqueness of a local in time strong solution, cf. Theorem~\ref{T1}, for the corresponding Cauchy problem, we need to pass to real spaces, introducing the variables
\begin{equation}
\label{(2.4)}u_1=\mathcal{R}e\, u,  u_2=\mathcal{I}m\, u.
\end{equation}
Now, we can pass to the proof of Theorem 1:\\
\indent With $ ({u_{1}}_{0},{u_{2}}_{0}) = (u_1(.,0),u_2(.,0))$, let
\begin{equation}
\label{(2.5)} U = (u_1,u_2,\rho,l,r), U_0 = ({u_{1}}_{0},{u_{2}}_{0},\rho_0,l_0,r_0),
\end{equation}
and
\begin{displaymath}
 \mathcal{A}(U)=\left[
\begin{array}{ccccccccccc}
0&A&0&0&0\\
-A&0&0&0&0\\
0&0&0&0&0\\
0&0&0&-\sqrt{\alpha+3\lambda v^2}\frac{\partial}{\partial x}&0\\
0&0&0&0&\sqrt{\alpha+3\lambda v^2}\frac{\partial}{\partial x}
\end{array}\right],
\end{displaymath}

\medskip

\begin{displaymath}
 g(t,U)=\left[
\begin{array}{cccccccccc}
-\rho u_2 + a (u_1^2+u_2^2)u_2\\
\rho u_1 - a (u_1^2+u_2^2)u_1\\
\frac 12(l+r)\\
-b\rho + |u|^2\\
-b\rho + |u|^2
\end{array}\right].
\end{displaymath}

The initial value problem \eqref{(2.2)}, \eqref{(2.3)} can be written in the form
\begin{equation}
\label{(2.6)}
\left\{
\begin{array}{ccccccc}
\displaystyle\frac{\partial}{\partial t}U+\mathcal{A}(U)U=g(t,U)\\
U(.,0)=U_0.
\end{array}\right.
\end{equation} 
Let us take
$$ U_0 = ({u_{1}}_{0},{u_{2}}_{0},\rho_0,l_0,r_0) \in Y= (D(A))^2\times(H^2(\R))^3$$
(the condition $\rho_0 \in H^3(\R)$ will be used later).
We now set $ X = (L^2(\R))^2\times(L^2(\R))^3$ and $S=((1-A)I)^2\times((1-\Delta)I)^3$,  which is an isomorphism $S:Y \rightarrow X$.
Furthermore, we denote by $W_R$ the open ball in $Y$ of radius $R$ centered at the origin and by $G(X,1,\omega)$ the set of linear operators $\Lambda\,:\,D(\Lambda)\subset X\to X$ such that:
\begin{itemize}
\item $-\Lambda$ generates a $C_0$-semigroup $\{e^{-t\Lambda}\}_{t\in\R_+}$;
\item for all $t\geq 0$, $\|e^{-t\Lambda}\|\leq e^{\omega t}$, where, for all $U\in W_R$, 
$$\omega=\frac 12 \sup_{x\in\R}\|\frac{\partial}{\partial x}a(\rho,l,r)\|\leq c(R), \quad c: [0,+\infty[\to [0,+\infty[ \textrm{ continuous, and }$$
\begin{displaymath}
 a(\rho,l,r)=\left[
\begin{array}{cccccccccc}
0&0&0\\
0&-\sqrt{\alpha+3\lambda v^2}&0\\
0&0&\sqrt{\alpha+3\lambda v^2}
\end{array}\right].
\end{displaymath}
\end{itemize}
By the properties of the operator $A$ (cf. Section 1) and following ~\cite[Section 12]{K}, we derive
$$\mathcal{A}\,: U= (u_1,u_2,\rho,l,r)\in W_R \to G(X,1,\omega),$$
and it is easy to see that $g$ verifies, for fixed $T>0$, $\|g(t,U(t))\|_Y\leq \theta_R$, $t\in[0,T]$, $U\in C([0,T];W_R)$.\\
For $(\rho,l,r)$ in a ball $\tilde{W}$ in $(H^2(\R))^3$, we set (see \cite[(12.6)]{K}), with $[.,.]$ denoting the commutator matrix operator,
$$B_0(\rho,l,r)=[(1-\Delta),a(\rho,l,r)](1-\Delta)^{-1}\in\mathcal{L}((L^2(\R))^3).$$
We now introduce the operator $B(U)\in\mathcal{L}(X)$, $U=(F_1,F_2,\rho,l,r) \in W_R$, by
\begin{displaymath}
 B(U)=\left[
\begin{array}{cccccccccc}
0&0&0&0&0\\
0&0&0&0&0\\
0&0\\
0&0&&B_0(\rho,l,r)\\
0&0
\end{array}\right].
\end{displaymath} 

In \cite[Section 12]{K}, Kato proved that for $(\rho,l,r)\in\tilde{W}$ we have
$$(1-\Delta)a(\rho,l,r)(1-\Delta)^{-1}=a(\rho,l,r)+B_0(\rho,l,r).$$
Hence, we easily derive 
 $$S\mathcal{A}(U)S^{-1}=\mathcal{A}(U)+B(U), U\in W_R.$$\\
 Now, it is easy to see that conditions (7.1)--(7.7) in Section~7 of ~\cite{K} are satisfied and so we can apply Theorem~6 in ~\cite{K} and we obtain the result 
 stated in Theorem 1, with $ \rho\in C ( [0,T] ; H^2 )\cap C^1( [0,T] ; H^1 ) \cap C^2 ( [0,T] ; L^2)$.\\
 To obtain the requested regularity for $\rho$ it is enough to remark that, since $\rho_x=v, \rho_t = w, \rho_0 \in H^3, v_0={\rho_0}_x\in H^2, w_0 =\rho_1 \in H^2$,
 we deduce $\rho_x =v \in C ( [0,T] ; H^2 ), \rho_t=w \in C ( [0,T] ; H^2 )$, and this achieves the proof of Theorem 1.

\section{Global existence in the semilinear case} \label{Global existence}

Now, we consider the semilinear case, that is when $ \alpha = \beta $ and so $\lambda = 0$.\\
Hence we pass to the proof of Theorem 2. For the local in time unique solution $(u,\rho)$ defined in the interval $[0,T^*[, T > 0$, to the Cauchy problem \eqref{(1.1)},\eqref{(1.2)}, obtained in Theorem 1, we easily deduce the following conservation laws (cf. ~\cite {ADM}) in the case $\lambda \geq 0, \alpha >0 $:\\ 
  \begin{equation} 
  \label{(3.1)}    \int |u(x,t)|^{2}\;dx = \int |u_{0}(x)|^{2}\;dx,  t \in [0,T^*[.
  \end{equation}
 \begin{equation}
 \begin{split}
 \label{(3.2)} E(t) = \frac{1}{2} \int (\rho_{t}(x,t))^{2}\;dx + \frac{\alpha}{2} \int (\rho_{x} (x,t))^{2}\;dx
+\frac{\lambda}{4} \int (\rho_{x}(x,t))^{4}\;dx\\ 
+\frac{b}{2} \int (\rho(x,t))^{2} \;dx -  \int \rho(x,t) |u(x,t)|^{2} \;dx +\int |u_{x}(x,t)|^{2} \;dx\\ + \frac{a}{2} \int |u(x,t)|^{4}\;dx +H^{2} \int x^{2}|u(x,t)|^{2}\;dx = E(0), t \in [0,T^*[.
 \end{split}
 \end{equation}
 Applying the Gagliardo-Nirenberg inequality to the term $|\frac{a}{2} \int |u(x,t)|^{4}\;dx|$ and since $b>0$ we easily derive 
 (cf.~\cite{ADM}), for $t\in[0,T^*[$,
\begin{equation}
\begin{split}
\label{(3.3)} \int (\rho_{t}(x,t))^{2}\;dx +  \int (\rho_{x} (x,t))^{2}\;dx +\lambda \int (\rho_{x}(x,t))^{4}\;dx\\ 
+ \int (\rho(x,t))^{2} \;dx  +\int |u_{x}(x,t)|^{2} \;dx + H^{2} \int x^{2}|u(x,t)|^{2}\;dx \leq c_1.
\end{split}
\end{equation}
We continue with the proof of Theorem 2, in the semilinear case, that is  $\lambda=0$. We have,  for $t\in[0,T^*[,$ 
\begin{equation}
\label{(3.4)} \|\rho(t)\|_2 \leq \|\rho_0\|_2 + \int_{0}^{t} \|\rho_t(\tau)\|d\tau \leq c_2(1+ t).
\end{equation}
Next we estimate $\|Au(t)\|_2,\|\rho_{xt}(t)\|_2$ and $\|\rho_{xx}\|_2$. For $\lambda = 0$, the system ~\eqref{(2.2)} reads
 \begin{equation}
\label{(3.5)}
\left\{
\begin{array}{ll}
iu_t+u_{xx}- H^2x^2u=-\rho u+ a |u|^2u\\
\rho_t=\frac 12 (l+r)\\
l_t-\sqrt{\alpha}\, l_x= - b \rho + |u|^2\\
r_t+\sqrt{\alpha} \, r_x= - b \rho + |u|^2\\
\end{array}
\right.
\end{equation}
with initial data ~\eqref{(2.3)}. To simplify, we assume $\alpha=\beta=b=1$.\\
Recall that we have, since $\lambda=0$,
\begin{equation}
\label{(3.6)}
\left\{
\begin{array}{l}
l= w+v= \rho_{t}  + \rho_{x}\\
\\
r=w-v= \rho_{t}  - \rho_{x}.\\
\end{array}
\right.
\end{equation}
From ~\eqref{(3.5)}, we derive
$$ r_{tx}r_x + r_{xx}r_x = -\rho_x r_x + 2 \mathcal{R}e(\bar{u}u_x)r_x,$$
and so
$$\frac{1}{2}\frac{d}{d t}\int (r_x)^2 dx \leq \frac{1}{2}\int[(\rho_x)^2 + (r_x)^2] dx + c_3\int(r_x)^2 dx + c_3,$$
and a similar estimate for $l_x.$ We deduce, with $c_4(t)$ being a positive, increasing  and continuous function,
\begin{equation}
\label{(3.7)} \|r_x(t)\|^2_2 + \|l_x(t)\|^2_2 \leq c_4(t), t\in [0,T^*[.
\end{equation}
Moreover, we derive from ~\eqref{(3.5)}, formally,
$$ \mathcal{R}e (u_{tt}\bar{u}_t ) + \mathcal{I}m [ (u_{xxt} - H^2x^2u_t)\bar{u}_t] = a \mathcal{I}m[(|u|^2u)_t\bar{u}_t],$$
$$\frac{1}{2} \frac{d}{dt} \int |u_t|^2dx - \mathcal{I}m\int u_{xt}\bar{u}_{xt}dx = 2a\mathcal{I}m\int \mathcal{R}e( u\bar{u}_{t}) u\bar{u}_{t}dx$$ 
$\leq c_5\int |u_t|^2 dx$, and hence
\begin{equation}
\label {(3.8)}\int |u_t|^2 dx \leq c_6(t) , t\in [0,T^*[.
\end{equation}
We deduce from ~\eqref{(3.5)},
\begin{equation}
\label {(3.9)} \|Au(t) \|_2 \leq c_7(t), t\in [0,T^*[.
\end{equation}
We have by ~\eqref{(3.5)},
$$ r_{txx} r_{xx} + r_{txx} r_{xx} = -\rho_{xx} r_{xx}  + 2\frac{d}{d x}[\mathcal{R}e(u\bar{u}_x)] r_{xx}$$
and so, formally,
\begin{equation}
\begin{split}
\label {(3.10)}\frac{1}{2}\frac{d}{d t}\int (r_{xx})^2 dx \leq \frac{1}{2}\int[(\rho_{xx})^2 + (r_{xx})^2] dx \\ +2 \int ( |u| |u_{xx}| + |u_x|^2) |r_{xx}|dx\\
 \leq \frac{1}{2}\int[(\rho_{xx})^2 + (r_{xx})^2] dx + c_8(t)\int|r_{xx}|^2 dx,
\end{split}
\end{equation}
by ~\eqref{(3.9)} and ~\eqref{(1.8)}.
But, by ~\eqref{(3.6)}, we derive
$$ \rho_{xx} = \frac{1}{2}(l_x - r_x),$$
and so, by ~\eqref{(3.7)} and ~\eqref{(3.10)}, we deduce
\begin{equation}
\label {(3.11)} \frac{d}{dt} \int (r_{xx})^2 dx \leq c_9(t) \int (r_{xx})^2 dx +c_{10}(t)
\end{equation}
and similarly
\begin{equation}
\label {(3.12)} \frac{d}{dt} \int (l_{xx})^2 dx \leq c_9(t) \int (l_{xx})^2 dx +c_{10}(t).
\end{equation}
We conclude that
\begin{equation}
\label {(3.13)} \|r_{xx}\|^2_2+  \|l_{xx}\|^2_2 \leq c_{11}(t), t \in [0,T^*[,
\end{equation}
with $c_{11}(t)$ being a positive, increasing and continuous function of $t \geq {0}$.
This achieves the proof of Theorem 2 (the operations that we made formally can be easily justified by a convenient smoothing procedure).

\section{Special case of initial data with compact support} \label{Special case}

 We assume the hypothesis of Theorem 2, that is is we consider the semilinear case ($\lambda = 0$) and, without loss of generality, we take 
 $\alpha = \beta = b = |a| = 1$. We also assume that the initial data verifies ~\eqref{(1.9)} for a certain $ d > 0 $. Following ~\cite[Section 2]{Cor}, if we take 
 $ \phi \in W^{1,\infty} (\R), $ real valued, and $u$ is the solution of the  Schr\"{o}dinger equation in ~\eqref{(1.1)}, we easily obtain
 $$ \mathcal{R}e \int \phi^2 u_t\bar{u}dx  + \mathcal{I}m \int \phi^2 u_{xx}\bar{u}dx = 0. $$ 
 We derive
 \begin{equation}
\label {(4.1)} \|\phi u(t)\|_2 \leq \|\phi u_0\|_2 + c_0 t \|\phi_x\|_\infty,\quad t\geq 0, 
\end{equation}
where  
$$ c_0 = 2 \sup_{t \geq 0}  \|u_x(t)\|_2.$$
Moreover, from the wave equation in ~\eqref{(1.2)} with $\lambda = 0$, we deduce for $t \geq 0$,
$$ \phi^2 \rho_{tt}\rho_t - \phi^2 \phi_{xx}\rho_t = -  \phi^2 \rho \rho_t +\phi^2 \rho_t |u|^2,$$
\begin{equation}
\begin{split}
\label {(4.2)} \frac{d}{dt} \int (\phi \rho_t)^2 dx + \frac{d}{dt} \int (\phi \rho_x)^2 dx + \frac{d}{dt} \int (\phi \rho)^2 dx\\
= 2 \int \phi^2 \rho_t |u|^2 dx \leq 2 \|\phi \rho_t\|_2 \|\phi u\|_2 \|u_0\|_2.
\end{split}
\end{equation}
We assume
\begin{equation}
\label {(4.3)}  0 \leq \phi \leq 1.
\end{equation}
 We have, by the Gagliardo-Nirenberg inequality and ~\eqref{(4.1)},
\begin{equation}
\begin{aligned}
\label {(4.4)} \|\phi u\|_\infty \leq \| \phi u\|_2^\frac{1}{2} \|(\phi u)_x\|_2^\frac{1}{2} \\
\leq (\| \phi u_0\|_2 + c_0 t \| \phi_x\|_\infty))^\frac{1}{2} ( \|\phi_x\|_\infty \|u_0\|_2+ \frac{c_0}{2})^\frac{1}{2}\\
= g_0 (t).
\end{aligned}
\end{equation}
Now, with
\begin {equation}
\label{(4.5)} g_1(t) = g_0 (t) \| u_0 \|_2,
\end {equation}
we deduce, from ~\eqref{(4.2)},~\eqref{(4.4)} and with
\begin {equation}
\label{(4.6)} f_1(t) = \int (\phi \rho_t)^2 dx +  \int (\phi \rho_x)^2 dx +  \int (\phi \rho)^2 dx,
\end{equation}
$$f_1^\frac{1}{2}(t) \leq f_1^\frac{1}{2}(0)  + 2\int_ {0}^{t} g_1(\tau)f_1^\frac{1}{2} (\tau) d\tau, $$
\begin {equation}
\label{(4.7)}f_1^\frac{1}{2} (t) \leq f_1^\frac{1}{2} (0)  + \int_ {0}^{t} g_1(\tau) d\tau \leq f_1^\frac{1}{2}(0) + t \|u_0\|_2 \, g_0(t), t\geq 0.
\end{equation}
Hence, if we define
\begin {equation}
\label{(4.8)} f(t) = f_1(t) + \|\phi u(t)\|_2^2, \quad t\geq 0,
\end {equation}
we derive, by \eqref{(4.7)}, \eqref{(4.8)} and  \eqref{(4.1)},
\begin {equation}
\begin{split}
\label{(4.9)} f^\frac{1}{2} (t) \leq f_1^\frac{1}{2} (t) + \|\phi u(t)\|_2 \leq f_1^\frac{1}{2}(0) + t \|u_0\|_2 g_0(t) + \| \phi u_0\|_2 +c_0 t\|\phi_x\|_\infty \\
\leq f^\frac{1}{2} (0) + t \|u_0\|_2(\|\phi u_0\|_2 + c_0 t \| \phi_x\|_\infty))^\frac{1}{2} ( \|\phi_x\|_\infty \|u_0\|_2+ \frac{c_0}{2})^\frac{1}{2} \\
+ \| \phi  u_0\|_2 +c_0 t\|\phi_x\|_\infty.
\end{split}
\end{equation}
Now, we fix $ t > 0 $ and $ \varepsilon > 0 $  and assume that the initial data verify ~\eqref {(1.9)}. We introduce the set 
$ C = \R\setminus (D + B( 0, \delta))$, $\delta$ to be chosen,  and the function $ \phi \in W^{1,\infty} (\R), $ real valued, verifying ~\eqref {(4.3)},  
$\phi= 0$ in $D$, $\phi=1$ in $C$ and $ \| \phi_x\|_\infty =\frac {1}{ \delta } $. We have $f(0)=0$, $\phi u_0 = 0$, and so, by ~\eqref{(4.9)}, we easily obtain
\begin {equation}
\label{(4.10)} f(t) \leq 2c_0\|u_0\|_2^3 \frac{t^3}{{\delta}^2} + c_0^2\|u_0\|_2^2\frac{t^3}{\delta} + 2c_0^2\frac{t^2}{{\delta}^2}, 
\end{equation}
and now we can choose  $\delta$ such that ~\eqref {(1.10)} is satisfied and the Theorem 3 is proved.

\section{Existence and partial stability of standing waves} \label{Orbital stability}

 We will consider the system  ~\eqref{(1.1)} in the attractive case $a =-1$ and without loss of generality we assume that $\alpha=1$. We want to study the existence and behaviour of standing waves of the system ~\eqref {(1.1)}, that is solutions of the form ~\eqref {(1.11)}. As we have seen in the introduction, we can rewrite this system as a scalar equation \eqref{(1.14)}. Following the technique introduced in ~\cite{HS et al} fort the Gross-Pitaevski equation, we consider the energy functional defined in $X_A$ by ~\eqref {(1.15)}. Recall that $ X_A   \subset L^q (\R) , 2 \leq q < + \infty$, with compact injection, and the norm in $X_A$ is equivalent to the norm
\begin{equation}
\label {(5.1)} \| u \|_{X_A}^2 = \int |u_x|^2 dx  + H^2 \int x^2 |u|^2 dx, H \neq 0, u\in {X_A}.
\end{equation}
We now pass to the proof of Theorem 4, which is a variant of Lemma 1.2 in ~\cite{HS et al}, whose proof we closely follow.
Let $\big\{u_n\big\}$ be a minimizing sequence of $\mathcal{E}$ defined by ~\eqref{(1.15)} in 
$X_A$ (real), that is
$$ u_n \in X_A , \|u_n\|^2 = c^2, \lim_ {n\rightarrow +\infty} \mathcal{E} (u_n) = \mathcal{I}_c \, ,$$
defined by ~\eqref{(1.16)}. Multiplying the equation satisfied by $\rho (|u|^2)$ by $u$ and integrating by parts, we find
$$\int [\rho_x^2 + \lambda \rho_x^4 + b \rho^2 ] dx= \int |u|^2 \rho dx. $$
Using Young's inequality, we get, for a constant $C$ depending on $b$ (we allow this constant to change from line to line),
$$  \int |u|^2 \rho dx \leq \frac{b}{2}\int \rho^2 dx + C_b \int |u|^4 dx. $$
So using the two previous lines, we get that
\begin{equation}
\label{estrho}
\int \rho^2 dx \leq C_b \int |u|^4 dx.  
\end{equation}
From H\" older's inequality, we obtain that, for some constant $\tilde{C}>0$,
\begin{equation}
\label{(5.2)}\int \rho |u|^2 dx \leq (\int \rho^2 dx )^{\frac{1}{2}} (\int |u|^4 dx)^{\frac{1}{2}}\leq \tilde{C}  \int |u|^4 dx.  
\end{equation}
and, by Gagliardo-Nirenberg inequality,
\begin{equation}
\label{(5.3)} \|u\|^4 \leq C \|u_x\|_2 \|u\|_2^3 , u\in H^1(\R).
\end{equation}
Hence, reasoning as in ~\cite{HS et al}, \eqref{(1.1)} in lemma 1.2, we derive, for each $\varepsilon > 0$ and $x\in X_A$, such that $\|u\|_2^2 = c^2$,
\begin{equation}
\label{(5.4)} \|u\|_4^4 \leq \frac{\varepsilon^2}{2} \|u_x\|_2^2 + \frac{1}{2\varepsilon^2} c^6,
\end{equation}
and so, for $u\in X_A$ such that $\|u\|_2^2 = c^2$, we deduce
\begin{equation}
\label{(5.5)} \mathcal{E} \geq (\frac{1}{2} -\frac{\varepsilon^2}{2} (\frac{1}{4}+\frac{\tilde{C}}{2} ))\|u_x\|^2 - \frac{1}{2\varepsilon^2 }(\frac{1}{4}+\frac{\tilde{C}}{2} ) c^6 - \frac{1}{2} H^2\int x^2|u|^2 dx,
\end{equation}
and we can choose $\varepsilon$ such that $1- \epsilon^2  (\frac{1}{4}+\frac{\tilde{C}}{2} ) > 0$.\\
Hence, the minimizing sequence is bounded in $X_A$ and there exists a subsequence $\big{\{u_n}\big\}$ such that $u_n \rightharpoonup u$ in $X_A$ (weakly).
Recalling that the injection of $X_A$ in $L^4(\R)$ is compact, we derive
\begin{equation}
\label{(5.6)} u_n \rightarrow u\, \textit {in} \, L^4(\R).
\end{equation}
Moreover, by the lower semi-continuity, we deduce
\begin{equation}
\label{(5.7)} \int (|u_x|^2 + H^2x^2|u|^2) dx \leq \liminf_{n \to \infty}\int (|{u_n}_x|^2 + H^2x^2|u_n|^2) dx.
\end{equation}
On the other hand, we have, setting $f:= \rho (|u|^2) - \rho (|u_n|^2):=\rho - \rho_n$, 
$$- f_{xx} - \lambda (\rho_x^3 -(\rho_n)_x^3 )_x +b f = |u|^2 - |u_n|^2 .$$
Noticing using Young's inequality that 
$$-\int (\rho - \rho_n)  (\rho_x^3 -(\rho_n)_x^3 )_x dx =\int (\rho_x^4 +(\rho_n)_x^4 - \rho_x^3 (\rho_n)_x - (\rho_n)_x^3 \rho_x ) dx \geq 0.  $$
So proceeding as in \eqref{estrho}, we can show that
\begin{equation}
\label{conrho1}
\|f\|_{L^2}^2\leq C \| |u|^2- |u_n|^2\|_{L^2}^2 .
\end{equation}
Using this last estimate, we deduce that
\begin{align*}
&|\int \rho (|u|^2) |u|^2 - \int \rho (|u_n|^2) |u_n|^2  | \\
& \leq |\int \rho (|u|^2) (|u|^2 - |u_n|^2) dx| + |\int (\rho (|u|^2) - \rho (|u_n|^2)) |u_n|^2 dx |\\
&\leq  \| \rho (|u|^2)\|_{L^2}  \||u_n|^2 - |u|^2 \|_{L^4}^2+ \|u_n\|_{L^4}^2 \|\rho (|u|^2) - \rho (|u_n|^2) \|_{L^2}\rightarrow 0,
\end{align*}

since $ |u_n|^2 \rightarrow |u|^2$ in $L^2(\R)$.\\
Hence, $u$ is a minimizer of  ~\eqref {(1.15)}, that is
$$ u\in X_A , \|u\|_2^2 = c^2, \mathcal{E}  = \mathcal{I}_c .$$
We conclude that $ \mathcal{E}(u_n) \rightarrow\mathcal{E}(u)$ and  so
\begin{equation}
\label{(5.8)} \int |{u_n}_x|^2 dx+ H^2\int x^2 |u_n|^2 dx \rightarrow  \int |u_x|^2 dx+ H^2\int x^2 |u|^2 dx.
\end{equation}
We derive that $u\in X_A$ and, denoting  by $ u^ {\star}$  the Schwarz rearrangement of the real function $u$, (cf. ~\cite{LL} for the definition and general properties),  we know that
$$ \|u^ {\star}_x\|^2 \leq \|u_x\|^2, \|u^ {\star}\|^4 = \|u\|^4.$$
The Polya-Szego inequality asserts that, for any $f\in W^{1,p}$ with $p\in [1,\infty]$,
$$\|\nabla f\|_{L^p} \geq \|\nabla f^\star \|_{L^p}.$$
Moreover, by ~\cite{HS et al}, we have
\begin{equation}
\label{(5.9)} \int x^2|u^ {\star}|^2 dx< \int x^2|u|^2 dx, \textit{unless} \,\, u= u^ {\star}.
\end{equation}

By \cite[Theorem 6.3]{H}, we know that
$$\int G (v(x)) dx \leq \int G (v^\star (x)) dx ,$$
provided that $G(t)= \int_0^t g (s) ds$ and $g: \R_+ \rightarrow \R_+$ is such that
$$|g(s)|\leq K (s+s^l),$$
where $K>0$, $l>1$ and $s\geq 0$. We want to apply this result for $G(s)=\rho (s^2) s^2$. So $g(s)= (\rho (s^2))_s s^2 +2s \rho (s^2) $. Observe that $(\rho (s^2))_s :=f $ is the solution to $- f_{xx} - 3\lambda ((\rho (s^2))_x^2 f_x )_x +bf =2s$. Using the maximum principle, we can show that $g: \R_+ \rightarrow \R_+$. On the other hand, by standard elliptic regularity theory, we have that $|\rho (s^2)|, |\rho (s^2)_s|\leq C(s+s^2 )$, for any $s>0$. So, \cite[Theorem 6.3]{H} yields that
$$\int \rho (|u|^2) |u|^2 dx \leq \int \rho (|u^\star|^2) |u^\star|^2 dx. $$
Combining all the previous inequalities, we see that
$\mathcal{E} (u^{\star}) <  \mathcal{E} (u)$ unless $u= u^{\star}$ a.e. and this proves that the minimizers of ~\eqref {(1.15)} are non-negative and radial decreasing. This completes the proof of Theorem 4.\\

We now pass to the proof of  Theorem 5, which follows the lines of the proof of Theorem 2.1 in ~\cite{HS et al}. For sake of completeness we repeat some 
parts of the proof to make it easier to follow.\\
We recall that, cf. ~\cite{CL}, to prove the orbital stability it is enough to prove that $\mathcal{Z} \neq \varnothing$ and that any sequence $\big\{w_n=(u_n,v_n)\big\} \subset \tilde{X}_A$ such that $\|w_n\|_2^2 \rightarrow c^2$ and $\tilde{E}(w_n) \rightarrow \tilde{\mathcal I}_c$, is relatively compact in $\tilde{\mathcal E}$. 
By the computations in the proof of Theorem 4, we have that the sequence $\big\{w_n\big\}$ is bounded in $\tilde{X}_A$ and so we can assume that there exists a subsequence, still denoted by $\big\{w_n\big\}$ and $w=(u,v) \in \tilde{X}_A$ such that $w_n \rightharpoonup w$ weakly in
 $\tilde{X}_A$, that is $u_n \rightharpoonup u, v_n \rightharpoonup v$ in $X_A$. Hence, there exists a subsequence, still denoted by $\big\{w_n)\big\}$, such that there exists
 \begin{equation}
\label{(5.11)} \lim _{n \to \infty} \int ( |{u_n}_x|^2 + |{v_n}_x|^2 ) dx .
\end{equation}
Now, we introduce $\varrho_n = |w_n| = ( u_n^2 + v_n^2 )^\frac{1}{2} $, which belongs to ${X}_A$.
Following the proof of~\cite[Theorem 2.1]{HS et al}, we have\\
 ${\varrho_n}_x = \frac {u_n {u_n}_x + v_n {v_n}_x} {(u_n^2 + v_n^2)^\frac{1}{2} }$, if $u_n^2 + v_n^2 > 0$, and ${\varrho_n}_x = 0$, otherwise.\\
 We deduce
 \begin{equation}
 \begin{aligned}
 \label{(5.12)}
 \tilde {\mathcal{E}}(w_n)-\mathcal {E} (\varrho_n) = \frac{1}{2} \int _{u_n^2 + v_n^2 > 0} \Big(\frac {u_n {u_n}_x - v_n {v_n}_x}{u_n^2 + v_n^2}\Big)^2 dx\\
- \frac{1}{4} \int {|(|u_n|^2+ |v_n|^2)|^2}d\xi +  \frac{1}{4}\int {|\varrho_n|^4 d\xi}\\
= \frac{1}{2} \int _{u_n^2 + v_n^2 > 0} \Big(\frac {u_n {u_n}_x - v_n {v_n}_x}{u_n^2 + v_n^2}\Big)^2 dx.
\end{aligned}
\end{equation}
Hence, we derive as in~\cite[Theorem 2.1]{HS et al},
\begin{equation}
\label{(5.13)} {\tilde{\mathcal{I}}}_c = \lim_{n \to \infty} \mathcal{\tilde E}(w_n) \geq \limsup_{n \to \infty} \mathcal{E} (\varrho_n)
\end{equation}
and
\begin{equation}
\label{(5.14)} \lim_{n \to \infty} \|\varrho_n\|_2^2 =  \lim_{n \to \infty}  \|w_n\|_2^2 = c^2.
\end{equation}
Applying Theorem 4 and with $c_n = \|\varrho_n\|_2$, we obtain
\begin{equation}
\label{(5.15)} \liminf _{n \to \infty} \mathcal{E} (\varrho_n) \geq  \liminf_{n \to \infty}  \mathcal{I}_{c_n} \geq \mathcal{I}_c \geq \mathcal{\tilde{I} }_c.
\end{equation}
Hence, by ~\eqref{(5.13)}  and ~\eqref{(5.15)}, we derive
\begin{equation}
\label{(5.16)} \lim_{n \to \infty} \mathcal{E} (\varrho_n)=\lim_{n \to \infty} \mathcal{\tilde {E}}(w_n) = \mathcal{I}_c = \mathcal{\tilde {I}}_c \, ,
\end{equation}
and so, by ~\eqref{(5.12)} and ~\eqref{(5.16)}, we get
\begin{equation}
\label{(5.17)} \lim_{n \to \infty} \int  |{u_n}_x|^2 + |{v_n}_x|^2 - | \partial_x \big(( u_n^2+ v_n^2 )^{\frac{1}{2}}\big) |^2  dx = 0.
\end{equation}
We can rewrite this last line as
\begin{equation}
\label{(5.18)} \lim_{n \to \infty} \int ( |{u_n}_x|^2 + |{v_n}_x|)^2 dx = \lim_{n \to \infty} \int |{\varrho_n}_x|^2 dx.
\end{equation}
Now, by ~\eqref{(5.14)}, \eqref{(5.16)} and iii) in Theorem 4, we conclude that  there exists $\varrho\in X_A$ such that $\varrho_n \rightarrow \varrho$ in $X_A$ and
$\| \varrho \|_2^2 = c^2 , \mathcal{E} (\varrho) = \mathcal{I}_c.$ Moreover $\varrho \in H^2(\R) \subset C^1(\R)$  is a solution of ~\eqref{(1.14)} and $\varrho > 0$.
We prove that $\varrho =(u^2 + v^2)^{\frac{1}{2}} $ just as in the proof of Theo. 2.1 in ~\cite[p.279]{HS et al}.\\
\noindent Finally, we prove that $\|{w_n}_x\|_2^2\rightarrow \|w_x\|_2^2 $:\\
By applying ~\eqref{(5.18)} we have
$\lim_{n \to \infty}\|w_n\|_2^2 = \lim_{n \to \infty} \|{\varrho_n}_x\|_2^2$
and\\ $\|{\varrho_n}_x\|_2^2 \rightarrow \|\varrho_x\|_2^2$, 
since $\varrho_n \rightarrow \varrho$ in $X_A$.\\
Hence,$\| w_x\|_2^2 \leq \lim_{n \to \infty} \|{w_n}_x\|_2^2 = \|\varrho_x\|_2^2.$
But it is easy to see that 
$$\| w_x\|_2^2 = \int (|u_x|^2 + (|v_x|^2) dx \geq \int _{u^2 + v^2 > 0} \Big(\frac {(u u_x + v v_x)^2}{u^2 + v^2}\Big)^2 dx = \|\varrho_x\|_2^2,$$
because $ ({u u_x + v v_x}^2) \leq (u^2 + v^2) (|u_x|^2 + (|v_x|^2).$ Hence,$\|{w_n}_x\|_2^2 \rightarrow \| w_x\|_2^2$. We also have that 
$ w_n \rightharpoonup w$, weakly in $ {\tilde {X}}_A $. In particular, by compactness, $ w_n \rightarrow w$ in $(L^2(\R))^2 \cap (L^4(\R))^2.$\\
Since $\mathcal{\tilde {E}}(w_n) \rightarrow \mathcal{\tilde {I}}_c = \mathcal{\tilde {E}}(c)$, we derive that $\int x^2|w_n|^2dx \rightarrow  \int x^2|w|^2dx$ and so $\| w_n\|_{{\tilde {X}}_A}^2 \rightarrow \| w \|_{{\tilde {X}}_A}^2$. We conclude that $w_n \rightarrow w$ in $ {\tilde{X}}_A $, and this achieves the proof of Theorem 5.

\begin{Remark}
We would like to remark that in the semilinear case, namely when $\lambda=0$, we can simplify some arguments. Indeed,
by applying the Fourier transform to \eqref{(1.14)}, we can solve explicitly this equation and derive
\begin{equation} 
\label{(1.13bis)}  \rho =  \mathcal{F}^{-1} \Big(\frac{\mathcal{F}|u|^2}{b+4\pi^2\xi^2}\Big ).
\end{equation}
The energy functional is then given by:\\
\begin{equation} 
\begin{split}
\label{(1.15bis)} \mathcal{E} (u) = \frac {1}{2} \int |u_x|^2 dx + \frac {1}{2} H^2 \int x^2|u|^2 dx \\ - \frac {1}{4} \int |u|^4 dx 
- \frac {1}{4} \int \frac {|\mathcal{F}|u|^2|^2} {1+4\pi^2\xi^2} d\xi, \, u\in X_A.
\end{split}
\end {equation}
We can use directly \eqref{(1.13bis)} to obtain an estimate on $\rho$. To prove the symmetry of minimizers, we can use Proposition 3.2 in ~\cite{J}, noticing that $(|u|^2)^{\star} = |u^{\star}|^2$, to deduce that
\begin{equation}
\label{(5.10)} \int \frac {|\mathcal{F}|u|^2|^2} {1+4\pi^2\xi^2}  d\xi \leq \int \frac { |\mathcal{F}(|u|^2)^{\star}|^2}{1+4\pi^2\xi^2}  d\xi 
= \int \frac{ |\mathcal{F}(|u^{\star}|^2)|^2}{1+4\pi^2\xi^2}  d\xi .
\end{equation}

\end{Remark}

\section{Bifurcation structure}
This section is devoted to the study of the bifurcation structure of solution to the minimization problem \eqref{(1.16)} namely we prove Proposition \ref{propbif}. We begin by showing a Pohozaev identity which is also of independent interest.

\begin{Lemma}[Pohozaev identity]
\label{lempoho}
Let $u\in X_A$ be a solution to \eqref{(1.14)}. Then we have
$$2\| u_x\|_2^2 - 2H^2 \|xu\|_2^2 -\frac{1}{2} \|u\|_4^4 + \int u^2 x\rho_x (|u|^2) dx =0  .$$
\end{Lemma}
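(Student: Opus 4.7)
\medskip

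\noindent\textbf{Proof plan.} The strategy is the classical Pohozaev trick: multiply the scalar equation \eqref{(1.14)} by the dilation generator $xu_{x}$, integrate over $\R$, and then eliminate the Lagrange multiplier $\mu$ and the coupling term $\int \rho u^{2}\,dx$ by combining with the standard energy identity obtained by testing \eqref{(1.14)} against $u$ itself. Throughout, $\rho=\rho(|u|^{2})$ is viewed simply as a function of $x$, so that the nonlinearity and the quasilinear structure of the $\rho$-equation do not enter the derivation explicitly; only the quantity $\rho_x=\frac{d}{dx}[\rho(|u|^{2})(x)]$ remains at the end.

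\medskip

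\noindent Multiplying \eqref{(1.14)} by $xu_{x}$ and integrating, each term is handled by a single integration by parts:
\begin{align*}
\int x u_{x} u_{xx}\,dx &= \tfrac{1}{2}\int x\,(u_{x}^{2})_{x}\,dx = -\tfrac{1}{2}\|u_{x}\|_{2}^{2},\\
-H^{2}\int x^{3} u u_{x}\,dx &= -\tfrac{H^{2}}{2}\int x^{3}(u^{2})_{x}\,dx = \tfrac{3H^{2}}{2}\|xu\|_{2}^{2},\\
\int |u|^{2} u \, x u_{x}\,dx &= \tfrac{1}{4}\int x\,(u^{4})_{x}\,dx = -\tfrac{1}{4}\|u\|_{4}^{4},\\
\int \rho\, u\, x u_{x}\,dx &= \tfrac{1}{2}\int x\rho\,(u^{2})_{x}\,dx = -\tfrac{1}{2}\int \rho u^{2}\,dx - \tfrac{1}{2}\int x\rho_{x} u^{2}\,dx,\\
\mu \int x u u_{x}\,dx &= -\tfrac{\mu}{2}\|u\|_{2}^{2}.
\end{align*}
Summing yields the Pohozaev-type identity
$$-\tfrac{1}{2}\|u_{x}\|_{2}^{2} + \tfrac{3H^{2}}{2}\|xu\|_{2}^{2} - \tfrac{1}{4}\|u\|_{4}^{4} - \tfrac{1}{2}\int \rho u^{2}\,dx - \tfrac{1}{2}\int x\rho_{x} u^{2}\,dx + \tfrac{\mu}{2}\|u\|_{2}^{2}=0.$$

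\medskip

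\noindent Next, multiplying \eqref{(1.14)} by $u$ and integrating gives the standard identity
$$-\|u_{x}\|_{2}^{2} - H^{2}\|xu\|_{2}^{2} + \|u\|_{4}^{4} + \int \rho u^{2}\,dx - \mu\|u\|_{2}^{2} = 0.$$
Taking twice the Pohozaev identity plus this standard identity cancels both the $\mu\|u\|_{2}^{2}$ term and the $\int \rho u^{2}\,dx$ term, producing
$$-2\|u_{x}\|_{2}^{2} + 2H^{2}\|xu\|_{2}^{2} + \tfrac{1}{2}\|u\|_{4}^{4} - \int x\rho_{x} u^{2}\,dx = 0,$$
which is exactly the claimed identity up to an overall sign.

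\medskip

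\noindent The main obstacle is rigorously justifying the integrations by parts, since $u\in X_A$ alone does not a priori provide enough decay. The fix is to observe that, from \eqref{(1.14)} itself, $Au=u_{xx}-H^{2}x^{2}u=(\mu-\rho-|u|^{2})u\in L^{2}$, so $u\in D(A)$; in particular $u\in H^{2}(\R)$ and $x^{2}u\in L^{2}$. Combined with the embedding $H^{1}\hookrightarrow L^{\infty}$ and the fact that $\rho\in H^{2}$ (obtained from the $\rho$-equation as in \eqref{estrho}--\eqref{conrho1} since $|u|^{2}\in L^{2}$), each of the products $xu_{x}^{2}$, $x^{3}u^{2}$, $xu^{4}$, $x\rho u^{2}$ and $x\rho_x u^{2}$ is integrable, and a standard cut-off argument $\chi(x/R)$ with $R\to\infty$ shows that all boundary contributions vanish. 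This yields the identity for every $u\in X_A$ solving \eqref{(1.14)}.
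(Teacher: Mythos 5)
Your proposal is correct and follows essentially the same route as the paper: test \eqref{(1.14)} against the Pohozaev multiplier $xu_x$ and against $u$, then take the linear combination that cancels both $\mu\|u\|_2^2$ and $\int \rho u^2\,dx$ (the paper's "multiplying by $xu$" is evidently a typo for $xu_x$, since its displayed identity coincides with yours up to sign). The only difference is that you additionally justify the integrations by parts via $u\in D(A)$ and $\rho\in H^2$, which the paper omits.
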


\begin{proof}
To simplify notation, we set $\rho:= \rho (|u|^2)$. Multiplying the equation \eqref{(1.14)} by $xu$ and integrating by parts, we get
$$\| u_x \|_2^2 - 3H^2 \|xu\|_2^2 + \dfrac{1}{2}\|u\|_{4}^4 + \int u^2 (\rho +x \rho_x) dx- \mu \|u\|_2^2=0 .$$
On the other hand, multiplying the equation by $u$ and integrating by parts, we get
\begin{equation}
\label{eqint}
\| u_x \|_2^2 +H^2 \|xu\|^2_2 - \int \rho u^2 dx - \|u\|_4^4  + \mu \|u\|_2^2 =0.
\end{equation}
So combining the two previous lines, we find
$$2\| u_x \|_2^2 - 2H^2 \|xu\|_2^2 -\frac{1}{2} \|u\|_4^4 + \int u^2 x\rho_x  dx=0.  $$
\end{proof}

Let us denote by $u_c$ a function achieving the minimum for the problem \eqref{(1.16)} and by $\mu_c$ its lagrange multiplier. We also set $\lambda_0$ for the first eigenvalue of the harmonic oscillator $-\partial_{xx}+H^2 x^2 $. We will show that $\mu_c$ converges to $-\lambda_0$ when the mass $c$ goes to $0$. 
\begin{Proposition}
  \label{prop3}
  We have
  $$\lim_{c\rightarrow 0} \mu_c =- \lambda_0.$$
\end{Proposition}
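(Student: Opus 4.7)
The plan is to compare $\mu_c$ with the variational characterization of the first eigenvalue $\lambda_0$ of the harmonic oscillator $L=-\partial_{xx}+H^2 x^2$ and to show that every nonlinear contribution in the equation satisfied by $u_c$ is of order $c^4$, hence of higher order than the leading $\lambda_0 c^2$.

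Let $\varphi_0\in X_A$ be the real, positive, $L^2$-normalized first eigenfunction of $L$. Testing $u=c\varphi_0$ in \eqref{(1.16)} and using that the quartic term and the $\rho$-coupling term in $\mathcal{E}$ are both non-positive (the latter because $\rho(c^2\varphi_0^2)\geq 0$ by the maximum principle for $-\partial_{xx}-\lambda(\rho_x^3)_x+b$) gives $\mathcal{I}_c\leq\mathcal{E}(c\varphi_0)\leq\tfrac{1}{2}\lambda_0 c^2$. Conversely, combining the Gagliardo--Nirenberg inequality \eqref{(5.4)} with the estimate \eqref{(5.2)} for $\int\rho\,|u|^2\,dx$ and choosing $\varepsilon>0$ small, one has, for every $u\in X_A$ with $\|u\|_2^2=c^2$,
\[
\mathcal{E}(u)\geq \eta\bigl(\|u_x\|_2^2+H^2\|xu\|_2^2\bigr)-Cc^{6}
\]
for some $\eta>0$ independent of $c$. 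Applying this to $u=u_c$ together with the upper bound above yields the fundamental a priori estimate $\|u_{c,x}\|_2^2+H^2\|xu_c\|_2^2\leq Kc^2$ for all sufficiently small $c$.

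With this quadratic control in hand, Gagliardo--Nirenberg \eqref{(5.3)} gives $\|u_c\|_4^4\leq C\|u_{c,x}\|_2\,c^3\leq Cc^4$, and then \eqref{(5.2)} transfers this to $\int\rho(|u_c|^2)u_c^2\,dx\leq Cc^4$. Multiplying \eqref{(1.14)} by $u_c$ and integrating by parts (this is precisely the identity \eqref{eqint} from the proof of Lemma~\ref{lempoho}) gives
\[
-\mu_c c^2=\|u_{c,x}\|_2^2+H^2\|xu_c\|_2^2-\|u_c\|_4^4-\int\rho(|u_c|^2)u_c^2\,dx.
\]
The Rayleigh-quotient characterization of $\lambda_0$ forces $\|u_{c,x}\|_2^2+H^2\|xu_c\|_2^2\geq\lambda_0 c^2$, while rearranging $\mathcal{E}(u_c)\leq\tfrac{1}{2}\lambda_0 c^2$ and using the previous $Cc^4$ bounds gives the matching upper inequality $\|u_{c,x}\|_2^2+H^2\|xu_c\|_2^2\leq\lambda_0 c^2+Cc^4$. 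Inserting both into the displayed identity produces $-\mu_c c^2=\lambda_0 c^2+O(c^4)$, whence $\mu_c\to-\lambda_0$ as $c\to 0$.

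The main obstacle is the a priori bound $\|u_{c,x}\|_2^2+H^2\|xu_c\|_2^2=O(c^2)$. The coercivity estimate used in the proof of Theorem~\ref{T4} produces, as written there, only boundedness in $X_A$ for each fixed $c$; to get the correct $c^2$ scaling one has to exploit the precise $Cc^6$ remainder together with the test-function upper bound $\mathcal{I}_c\leq\tfrac{1}{2}\lambda_0 c^2$. Without this sharpening, Gagliardo--Nirenberg would only give $\|u_c\|_4^4=O(c^3)$, one order too large to be subleading compared to $\lambda_0 c^2$, and the conclusion $\mu_c\to -\lambda_0$ would escape.
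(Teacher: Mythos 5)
Your proof is correct, but it reaches the crucial a priori estimate $\|(u_c)_x\|_2^2+H^2\|xu_c\|_2^2=O(c^2)$ by a genuinely different route than the paper. The paper first proves the exact inequality $-\mu_c\le\lambda_0$ from the identity $-c^2\mu_c=2\mathcal{E}(u_c)-\tfrac12\|u_c\|_4^4$ and the test function $cu_0$, and then extracts the bound $\|(u_c)_x\|_2^2\le Mc^2$ from the Pohozaev identity of Lemma~\ref{lempoho} combined with \eqref{eqint}; that step requires an integration by parts in $\int u_c^2\, x\rho_x\,dx$ and uses the radial monotonicity $u_r\le 0$ of the minimizer (hence Theorem~\ref{T4}\,iii)) to control the sign of the resulting term. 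You instead bypass Pohozaev and monotonicity entirely: you combine the test-function bound $\mathcal{I}_c\le\tfrac12\lambda_0c^2$ with the coercivity estimate $\mathcal{E}(u)\ge\eta\|u\|_{X_A}^2-Cc^6$, which is exactly the (sign-corrected) content of \eqref{(5.5)} --- note the printed $-\tfrac12H^2\int x^2|u|^2$ there is a typo for $+$, as is clear from \eqref{(1.15)}. Your route is more elementary, controls the full $X_A$ norm rather than only $\|(u_c)_x\|_2^2$, and yields the explicit rate $-\mu_c=\lambda_0+O(c^2)$; the paper's route leans on the Pohozaev identity, which it wants on record anyway for independent interest. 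Both arguments rely on the same sign information $\rho(|u|^2)\ge0$ (maximum principle) and on \eqref{(5.2)}--\eqref{(5.4)}. The endgame --- the integrated identity for $-\mu_cc^2$, the Rayleigh-quotient lower bound, and the $O(c^4)$ control of the nonlinear terms --- is essentially identical in the two proofs.
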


\begin{proof}
In a first time, we are going to show that $-\mu_c \leq \lambda_0$. Multiplying the equation satisfied by $u_c$ by $u_c$ and integrating by parts, we get
$$- c^2 \mu_c = \|(u_c)_x \|_2^2 +H^2\|xu_c\|_2^2 -\|u_c\|_4^4 - \int \rho (|u_c|^2) u_c^2 dx = 2E(u_c) - \dfrac{\|u_c\|_4^4}{2}.$$
Thus, we deduce that
$$-\mu_c \leq \dfrac{2 E(u_c)}{c^2}.$$
 Let $u_0$ be the eigenfunction associated to $\lambda_0$ namely $\|u_0\|_{X_A}^2 =\lambda_0$ and $\|u_0\|_2=1$. We set $v_c =cu_0$.
Using that $u_c$ is a minimiser of problem \eqref{(1.16)}, we have that $E(u_c)\leq E (v_c)$ and
$$ \dfrac{2 E(cu_0)}{c^2} =\|(u_0)_x \|_2^2 +H^2\|xu_0\|_2^2 - \dfrac{c^2}{2} \int u_0^4 dx - \int \rho (|u_0|^2) u_0^2 dx \leq \lambda_0.  $$
This proves that $-\mu_c \leq \lambda_0$.

Using Pohozaev's identity (see Lemma \ref{lempoho}) and \eqref{eqint}, we have
$$2 \|( u_c)_x\|_2^2 + \int u_c^2 (\frac{x\rho_x (|u_c|^2)}{2} - \rho (|u_c|^2)) dx - \frac{5}{4} \|u_c\|_4^4 + \mu_c \|u_c\|_2^2 =0. $$
So, recalling that $-\mu_c \leq \lambda_0$, we have for a constant $M>0$ not depending on $c$ that
$$\|(u_c)_x\|_2^2 \leq Mc^2 + M  \|u_c\|_4^4 - \int u_c^2 (\frac{x\rho_x (|u_c|^2)}{2} - \rho (|u_c|^2)) dx .$$
Notice that, integrating by parts and using radial coordinates,
\begin{align*}
 \int u_c^2 x\rho_x (|u_c|^2) dx& =\int u_c^2 (x\rho (|u_c|^2))_x dx -  \int u_c^2 \rho (|u_c|^2) dx\\
&= -2\int u_c (u_c)_r  r \rho (|u_c|^2) dr-  \int u_c^2 \rho (|u_c|^2) dx \\
&\geq - \int u_c^2 \rho (|u_c|^2) dx. 
\end{align*}
In the last inequality, we used that $u_r \leq 0$.
So, by \eqref{(5.2)}, we obtain, for some constant $M$ not depending on $c$,
$$\|(u_c)_x\|_2^2 \leq Mc^2 +M  \|u_c\|_4^4 +M \int u_c^2  \rho (|u_c|^2) dx \leq   Mc^2 +M  \|u_c\|_4^4.$$
The Gagliardo-Nirenberg's inequality \eqref{(5.3)} and Young's inequality then imply that
$$\|(u_c)_x\|_2^2 \leq M c^2.$$

We have, by definition of $\lambda_0$,
\begin{align*}
-\mu_c& = \dfrac{\|(u_c)_x\|_2^2 +H^2 \|x u_c\|_2^2}{c^2} - \dfrac{\|u_c\|_4^4 +\int \rho (|u_c|^2) u_c^2 dx}{c^2} \\
&\geq \lambda_0 - \dfrac{\|u_c\|_4^4 +\int \rho (|u_c|^2) u_c^2 dx}{c^2}.  
\end{align*}
Then, using \eqref{(5.2)} and Gagliardo-Nirenberg's inequality \eqref{(5.3)}, we deduce that, for some constant $k$ not depending on $c$,
$$-\mu_c \geq \lambda_0 -kc^4. $$
Taking $c\rightarrow 0$, the result follows.

\end{proof}

Adapting the proof of Proposition $6.7$ of \cite{Hirose}, we can prove the uniqueness of positive solution to \eqref{(1.14)} namely Proposition \ref{propuni}.

\begin{proof}[Proof of Proposition \ref{propuni}]
We denote by $u(r,\alpha_1)$ the radial solution to \eqref{(1.14)} such that $u(0,\alpha_1)=\alpha_1$. Suppose that there exists two number $0<\alpha_1<\tilde{\alpha}_1$ such that $u(r,\alpha_1)$ and $u(r,\tilde{\alpha}_1)$ are two positive radial solution decaying to $0$ at infinity. To simplify notation, we set $u(r)=u(r,\alpha_1)$ and $\eta (r)=u(r,\tilde{\alpha}_1)$. Let $\psi = \eta - u $. In the following, we denote by $u'= \partial_r u$. Then $\psi$ satisfies
\begin{equation}
\label{eqpsi}
\psi'' - (\lambda +r^2 )\psi + \dfrac{|\eta|^2 \eta - |u|^2 u +\rho (|\eta|^2) \eta - \rho (|u|^2) u}{\eta - u}\psi=0.
\end{equation}
Multiplying the previous equation by $u$ and multiplying \eqref{(1.14)} by $\psi$, taking the difference and integrating by parts, we find
\begin{align*}
\psi' (r) u(r) - u^\prime (r) \psi (r)&= \int_0^r (u^3 +\rho (|u|^2 )u) \psi dx\\
& - \int (\eta^3 + \rho (|\eta|^2) \eta -u^3 -\rho (|u|^2) u) u dx \\
&= \int (u^{2} +\rho (|u|^2) - \eta^{2} - \rho (|\eta|^2)) \eta u dx.
\end{align*}
Observe that the left-hand side goes to $0$ as $r\rightarrow \infty$ whereas if we assume that $\eta (r)>u(r)$ for all $r\geq 0$, the left-hand side converges to a negative constant. So there exists $\gamma_1$ such that $\eta (\gamma_1)=u(\gamma_1)$ (by the maximum principle, we can show that $\rho(|u|^2) -\rho (|\eta|^2)<0$).

Next, we will show that it is in fact the only intersection point between $u$ and $\eta$. Indeed, suppose by contradiction that there exists $\gamma_2>\gamma_1$ such that
$$0<\eta (r) < u(r) \ \text{for}\ r\in (\gamma_1 ,\gamma_2),\ u(\gamma_2)=\eta (\gamma_2).$$
This implies that
$$\psi (r)<0\ \text{for}\ r\in (\gamma_1 ,\gamma_2),\ \psi^\prime (\gamma_1) <0,\ \psi^\prime (\gamma_2)>0\ \text{and}\ \psi(\gamma_1)=\psi (\gamma_2).$$
Let $\xi$ be a solution to
\begin{equation}
\label{eqxi}
\begin{cases}
&\xi'' - (\lambda +r^2 ) \xi +[p |u|^{p-1}+  \partial_u (\rho (|u|^2) u )]\xi = 2r u,\ r>0\\
& \xi (0)=0,\ \xi^\prime (0)= (\lambda -\rho (\alpha^2) ) \alpha -\alpha^p.
\end{cases}
\end{equation}

In fact, we can think of $\xi$ as $u^\prime$ noticing that $(\rho (|u|^2) u )_x= u^\prime (\rho (|u|^2) + u  \partial_u (\rho (|u|^2)))$. Let
$$\chi (r)=p|u|^{p-1} +\partial_u (\rho(|u|^2)u) - \dfrac{|\eta|^{p-1} \eta - |u|^{p-1} u+ \rho (|\eta|^2) \eta -\rho (|u|^2) u}{\eta -u}.$$
Observe that the function $u\rightarrow \rho (|u|^2) u$ is convex. Indeed $\partial_{uu}(\rho (|u|^2) u)= u\partial_{uu} \rho (|u|^2) + 2 \partial_u \rho (|u|^2)$ where $\partial_u \rho (|u|^2) :=f$ is the solution to
$$-f'' - 3 \lambda ((\rho (|u|^2)^\prime)^2 f_x)' +bf = 2u, $$
and, $\partial_{uu}\rho (|u|^2)=g$ is the solution to
$$-g'' - 3\lambda  ((\rho (|u|^2)^\prime)^2 g_x)' +bg = 2 + 6\lambda ((\partial_u \rho (|u|^2))^2)^{\prime \prime}.  $$
By the maximum principle, we see that $f\geq 0$ and $g\geq 0$ (since by comparison principle we can show that $\rho (t x_1)\leq t \rho (x_1)$ which implies, using once more comparison principle that $\rho (t x_1 +(1-t) x_2)\leq t \rho (x_1) + (1-t)\rho (x_2)$, for all $x_1,x_2\geq 0$ and $t\in [0,1]$). Using this and the convexity of $u^p$, we see that $\chi (r)>0$ when $r \in (\gamma_1 ,\gamma_2)$. Taking the difference of \eqref{eqpsi} multiplied by $\xi$ and \eqref{eqxi} multiplied by $\psi$ and integrating by parts on $[\gamma_1 ,r]$, we find
\begin{equation}
\label{eqwronk}
\xi (r) \psi^\prime (r) - \xi^\prime (r) \psi (r) = \xi (\gamma_1) \psi^\prime (\gamma_1) +\int_{\gamma_1}^r (\chi (s) \xi (s) \psi (s) -2 s  u(s) \psi (s))ds.
\end{equation}
Taking $r=\gamma_2$ in the previous identity, we get
$$\xi (r_2) \psi^\prime (\gamma_2) = \xi (\gamma_1) \psi^\prime (\gamma_1) + \int_{\gamma_1}^{\gamma_2} [\chi (s) \xi (s) \psi (s) -2 s  u(s) \psi (s) ]ds.$$
This is a contradiction since the left-hand side is strictly negative while the right-hand side is strictly positive. This establishes that $u$ and $\eta$ intersect exactly once.

Finally, we show that $\eta$ has to change sign. Suppose by contradiction that 
$$0<\eta (r) < u(r)\ \text{for}\ r\in (\gamma_1 ,\infty).$$
This implies that $\psi (r)<0$ for $r\in (\gamma_1 ,\infty)$, $\psi^\prime (\gamma_1)<0$ and $\psi (\gamma_1)=0$. Since $u,u^\prime$ and $u''$ go to $0$ as $r\rightarrow \infty$ (and the same for $\eta$), we see that the right-hand side of \eqref{eqwronk} goes to $0$ taking $r\rightarrow \infty$ whereas the righ-hand side converges to a positive constant. Therefore, $\eta$ cannot be positive everywhere and consequently $u$ is the unique positive radial solution to our equation.
\end{proof}

We are finally in position to prove our bifurcation result, i.e. Proposition~\ref{propbif}.
\begin{proof}[Proof of Proposition \ref{propbif}]
Since $\lambda_0$ is a simple eigenvalue, we can apply standard bifurcation results (see for instance \cite[Theorem 2.1]{H2}) to deduce that $(\lambda_0 ,0)$ is indeed a bifurcation point and that the branch is unique provided that we are sufficiently close to the bifurcation point. Next, the previous Lemma guarantees that the minimizer of \eqref{(1.16)} $u_c$ actually belongs to this branch at least for $c>0$ small enough. Finally, we use our uniqueness result Proposition \ref{propuni} to see that the set $\{u_c ,c>0\}$ is convex and therefore included in the bifurcating branch.  
\end{proof}

\section{Numerical simulations} \label{Simulations}
In this section we perform some numerical simulations to illustrate our results.
We investigate the limit $\mu \to -\lambda_0$ mentioned in the previous section, and
analyse the behaviour of standing waves with the variation of the intensity of the magnetic field 
$\mathbf H$.

\subsection{Numerical method}
\label{sec:numerical-method}
Our first goal is to numerically approximate the standing waves \eqref{(1.11)}, according to the system \eqref{(1.12)}.
Following \cite{HS et al}, we use a shooting method. However, in the present case, the director field angle $\rho = \rho(|u|^2)$ acts as an additional potential type term, depending on $u$ itself. Due to this, we perform a Picard iteration and look for a fixed point $u$ of the operator $\varphi\mapsto \Phi(\varphi)$, where $\Phi(\varphi)$ is the solution of 
\begin{equation}
  \label{eq:2}
  \aligned
  u_{xx} - H^2 x^2 u + |u|^2 u + \rho(\varphi) u = \mu u,
  \endaligned
\end{equation}
with $\rho(\varphi)$ solving
\begin{equation}
  \label{eq:3}
  \aligned
  -\rho_{xx} - \lambda(\rho^3_x)_x + b \rho = \varphi
  \endaligned
\end{equation}
and with boundary conditions $u(0)  = u_0>0$, $\rho(0) = \rho_0 >0$, $u(\infty) =u'(0)  = \rho(\infty)= \rho'(0) = 0$.

According to the results in previous sections, we look for $u\in\R$ even, smooth, vanishing at infinity, strictly positive and decreasing with $|x|$. For convenience, we shall denote the class of functions verifying these conditions by $\mathcal{V}$. Although there is no result giving a similar structure for $\rho(x)$, it is natural to assume that $\rho$ satisfies the same hypotheses as $u$, at least for small $\lambda$, and so we look for $u,\rho \in \mathcal{V}$.

We now describe our procedure in more detail. First, equations \eqref{eq:2},\eqref{eq:3} can be recast as a first-order system:
\begin{equation}
  \label{eq:4}
  \left\{\aligned
    &u_x = w
    \\
    &w_x =  H^2 x^2 u - |u|^2 u - \rho(\varphi) u + \mu u
    \\
    &\rho_x = v
    \\
    &v_x =  -\lambda(v^3)_x + b \rho - \varphi^2,
  \endaligned\right.
\end{equation}
with boundary conditions $u(0) = u_0>0$, $\rho(0)=\rho_0>0$, $v(0)=w(0)=0$, and $\varphi \in \mathcal V$.

At each stage in the Picard iteration, we need, for a given $\varphi\in \mathcal V$, to find $(u,w,\rho,v)$ solving \eqref{eq:4}. As mentioned, we employ a shooting method, which we now describe. Suppose that we have computed $\rho(\varphi), v(\varphi)$, and wish to compute $u,w$. The idea is to adjust the initial value $u(0)=u_0$ so that $u(\infty) =0$. Following \cite{HS et al}, $u_0$ should verify $u_0 = \sup\{ \beta >0 : u(x;\beta) >0, x>0\}$ where $u(x;\beta)$ is the solution of \eqref{eq:2} with $u(0) =\beta$, $u\in\mathcal V$. At each step of the shooting method, we look for $u_0$ in an interval $[a_n,b_n]$. We set $u_{0,n} = (a_n+b_n)/2$ and solve the first two equations of \eqref{eq:4} using an explicit Euler scheme (which is sufficient for our purposes) with $w(0) = 0$. Then, if $u$ attains negative values for some $x$, we set $a_{n+1} = u_{0,n}$, $b_{n+1} = b_n$, thus decreasing $u_{0,n+1}$. Conversely, if $u(x)$ is increasing at some point (so that it does not belong in the class $\mathcal V$), we set $a_{n+1}=a_n$ and $b_{n+1} = u_{0,n}$, which increases $u_{0,n+1}$.

The procedure to compute $\rho$ and $v$ is similar, except that the behaviour of $\rho$ exhibits an inverse dependence on the initial value $\rho(0)$; thus in each iteration of the shooting method the value of $\rho(0)$ is increased when $\rho$ becomes negative, and decreased when $\rho$ becomes increasing.

Let us mention that on each iteration of the shooting method, the equation for $v$ in \eqref{eq:4} contains a nonlinear term when $\lambda \neq 0$. The discretized equation reads
\begin{equation}
  \aligned
  \frac{v_{j+1} -v_j}{dx} = -\lambda \frac{(v_{j+1})^3 - (v_j)^3}{dx} + b\rho_j - (\varphi_j)^2,
  \endaligned
\end{equation}
and so we use a Newton method at each step to approximately solve for $v_{j+1}$.

As a starting point to the Picard iteration, we take $u^{(0)}(x) \in \mathcal V$ as the solution with $\rho=0$, that is, $u^{(0)}$ solves $u_{xx} - H^2 x^2 u + |u|^2 u  = \mu u$ with $u^{(0)}\in \mathcal V$.  
With an initial guess $u^{(0)}(x) \in \mathcal V$ for the Picard iteration in hand, we compute $u^{(1)}(x), w^{(1)}(x)$, and so on, using the shooting method, according to
\begin{equation}
  \label{eq:5}
  \left\{\aligned
    &u^{(n)}_x = w^{(n)}
    \\
    &w^{(n)}_x =  H^2 x^2 u^{(n)} -  (u^{(n)})^3 - \rho(u^{(n-1)}) u^{(n)} + \mu u^{(n)},
  \endaligned\right.
\end{equation}
where $\rho(u^{(n-1)})$ solves
\begin{equation*}
  \left\{\aligned
  &\rho_x = v,
  \\
  &v_x =  -\lambda(v^3)_x + b \rho - (u^{(n-1)})^2
  \endaligned\right.
\end{equation*}
(also using the shooting method),
with boundary conditions $u(0) = u_0>0$, $\rho(0)=\rho_0>0$, $v(0)=w(0)=0$.

\subsection{Numerical results}
\label{sec:numerical-results}

\providecommand{\mymy}{1}
\begin{figure}[!h] \includegraphics[width=\mymy\textwidth]{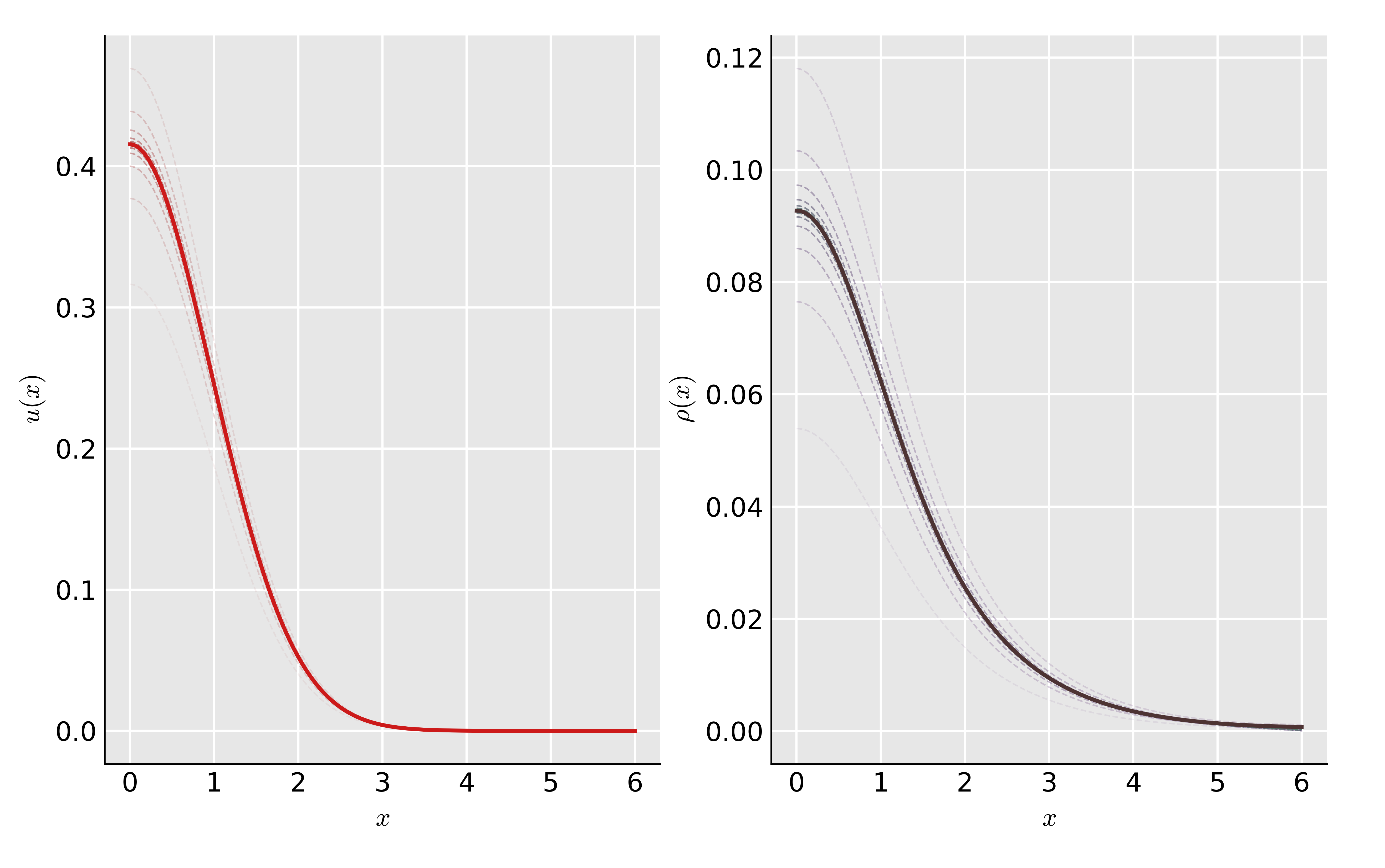}
 \caption{Numerical approximation of the standing wave $u(x)$ and the director field angle $\rho(x)$, solutions to \eqref{(1.12)}, computed using a shooting method and Picard iteration (Picard iterations in dashed lines). Parameters are $H=1,\mu=-0.8,\lambda=0.1, b=1$.}
 \label{fig:1}
\end{figure}

In Fig.~\ref{fig:1}, we plot the standing wave $u(x)$ and the director field angle $\rho(x)$ calculated according to the procedure described previously. The dashed lines correspond to the iterations of the Picard method. For this simulation, we have used a spatial step $dx = 0.002$ (corresponding to $3000$ spatial points) and 15 Picard iterations.

Next, we illustrate the result of Proposition~\ref{prop3}. First, note that it is easy to see that $u^*(x) = e^{-\frac{H}2x^2}$ is the first eigenfunction of the harmonic oscillator $-\partial_{xx} + H^2 x^2$, with eigenvalue $\lambda_0=H$. Note that in our notations, the parameter $-\mu$ plays the role of $\lambda_0$.
In parallel to \cite{HS et al}, and in accordance with Proposition~\ref{prop3}, we verify numerically that the $L^2$ norm of $u_\mu$ goes to zero as $\mu \to -\lambda_0^+$. Taking $H=\lambda_0=2,$ we show in Fig.~\ref{fig:2} the numerical solutions of \eqref{(1.12)} for various values of $\mu \to -\lambda_0^+$. We can see that the solutions appear to converge to zero, although the convergence is very slow. In Fig.~\ref{fig:3}, we show how the $L^2$ norm of $u=u_\mu$ varies as the Lagrange multiplier $\mu$ tends to the value $-\lambda_0$. Our numerical tests indicate that, although slow, the convergence to zero of the $L^2$ norm of $u_\mu$ is verified, in accordance with Proposition~\ref{prop3}.

\providecommand{\mymy}{1}
\begin{figure}[!h] \includegraphics[width=\mymy\textwidth]{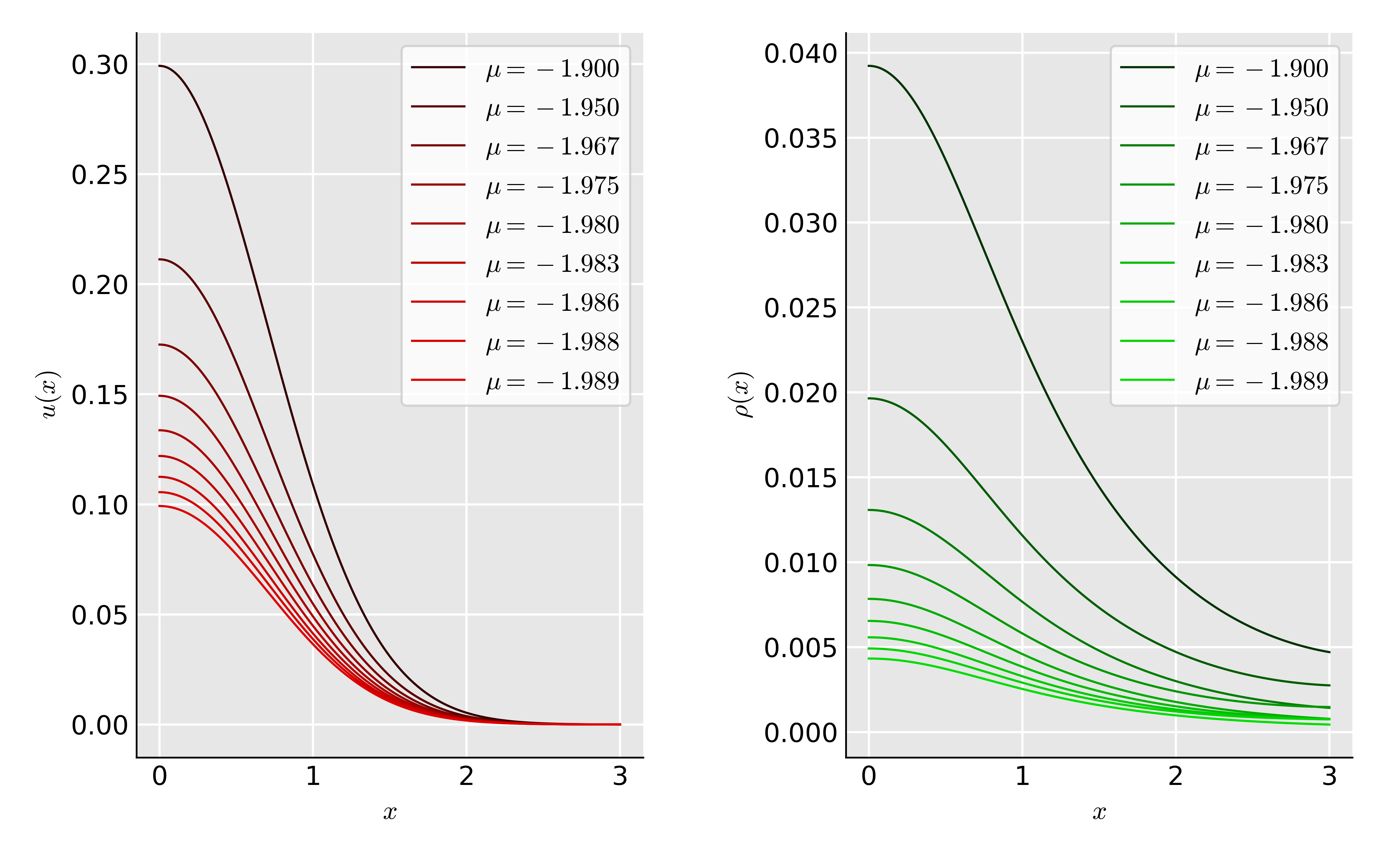}
  \caption{Numerical approximation of the standing wave $u(x)$ and the director field angle $\rho(x)$, solutions to \eqref{(1.12)}, with $\mu\to -\lambda_0$. Parameters are $H=\lambda_0 =2,\lambda=0.1, b=1$.}
  \label{fig:2}
\end{figure}

\providecommand{\mymy}{0.2}
\begin{figure}[!h]
  $$
  \includegraphics[width=0.5\textwidth]{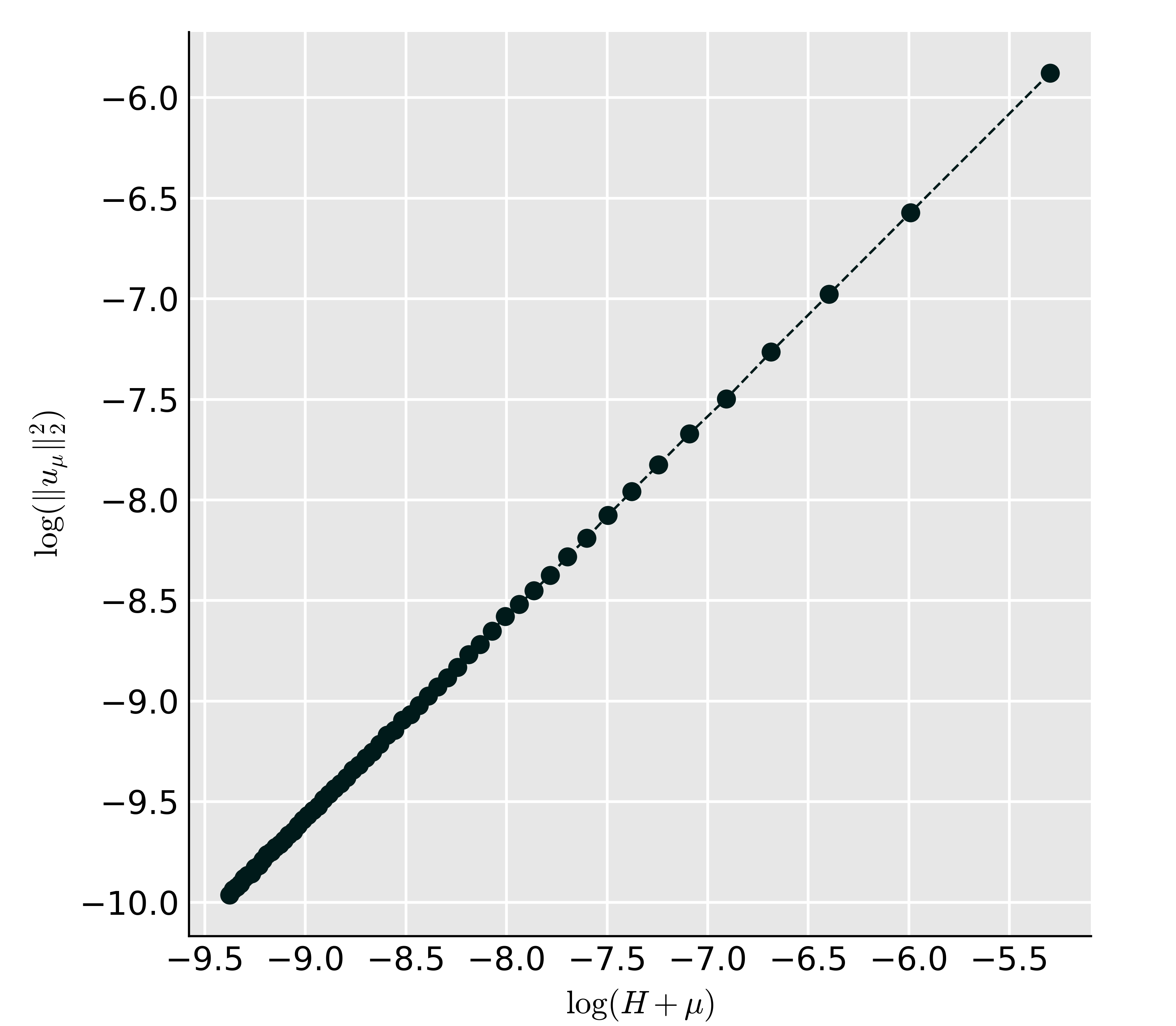}
  \includegraphics[width=0.5\textwidth]{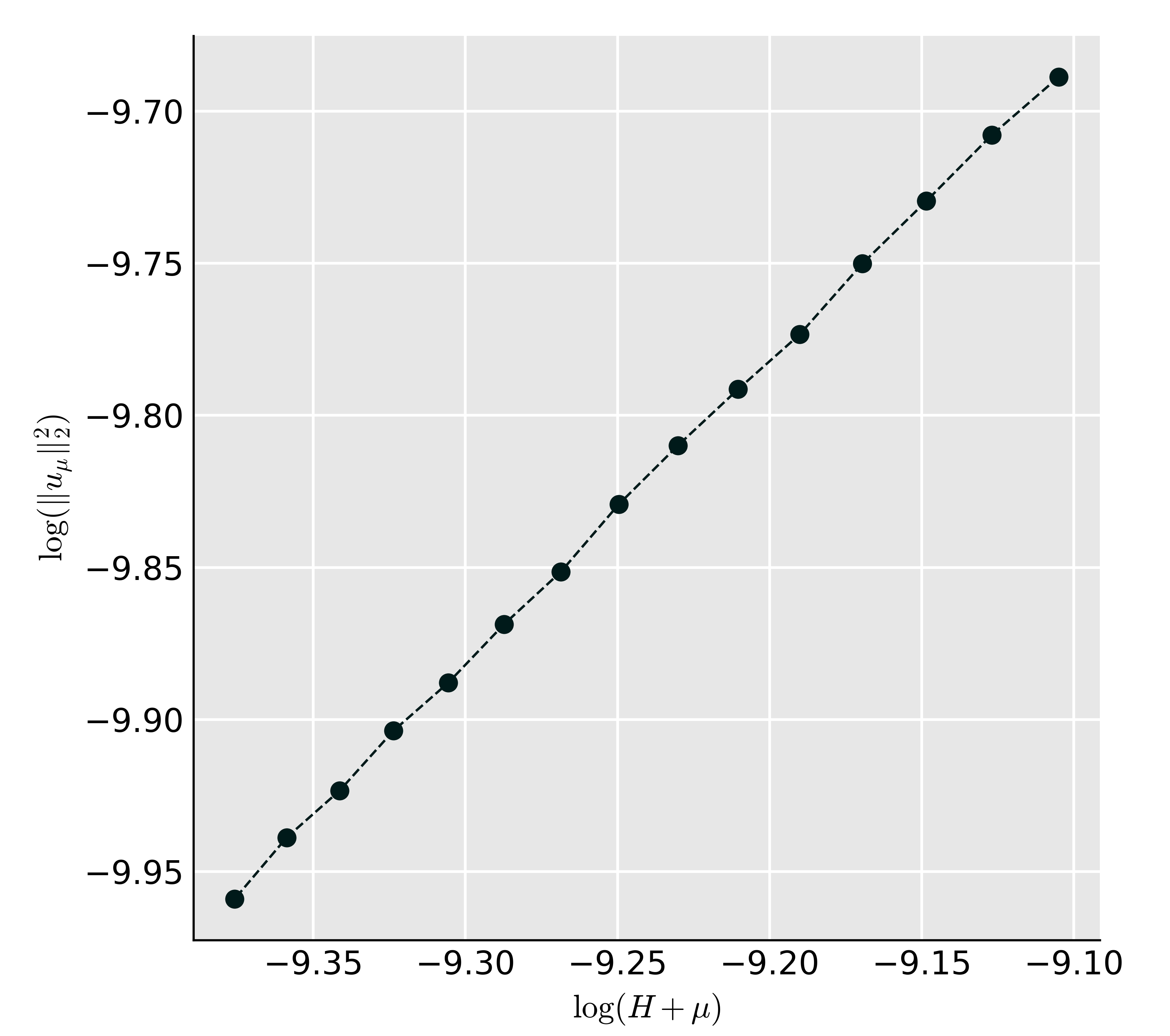}
  $$
 \caption{The norm $\|u\|_2^2$ as a function on the Lagrange multiplier $\mu$ as $\mu\to -\lambda_0 = - H =-2$, in $\log$-$\log$ scale. The values of $\mu$ are the same as in Fig.~\ref{fig:2}, but $\mu$ is ranging from $-1.9$ to $-1.9999153$, taking 60 values (left). On the right is a zoom on the last 15 values of $\mu$.}
 \label{fig:3}
\end{figure}

Next, we investigate numerically the behaviour of the standing wave when the intensity of the magnetic field, $H$, is varied. It turns out that for each set of parameters that we analyzed, there is a maximum (relatively small) value of $H$ such that our numerical method diverges for larger values of $H$. This may be related to the observation that the behaviour of $u$ (and $\rho$) with respect to $u(0)$ is very sensitive to perturbations: any arbitrarily small perturbation of the $u(0)$ found by the shooting method produces a solution which (numerically at least) quickly blows up exponentially. The desired solution appears to be unstable in this sense, and this effect appears more markedly for larger values of $H$. Still, in Fig.~\eqref{fig:4} we show the behaviour of the solution for $H$ between 0 and 2, which lets us nevertheless see the general trend. In particular, it is clear that the director field angle becomes more concentrated at the origin for larger values of $H$.

\providecommand{\mymy}{1.}
\begin{figure}[!h] \includegraphics[width=\mymy\textwidth]{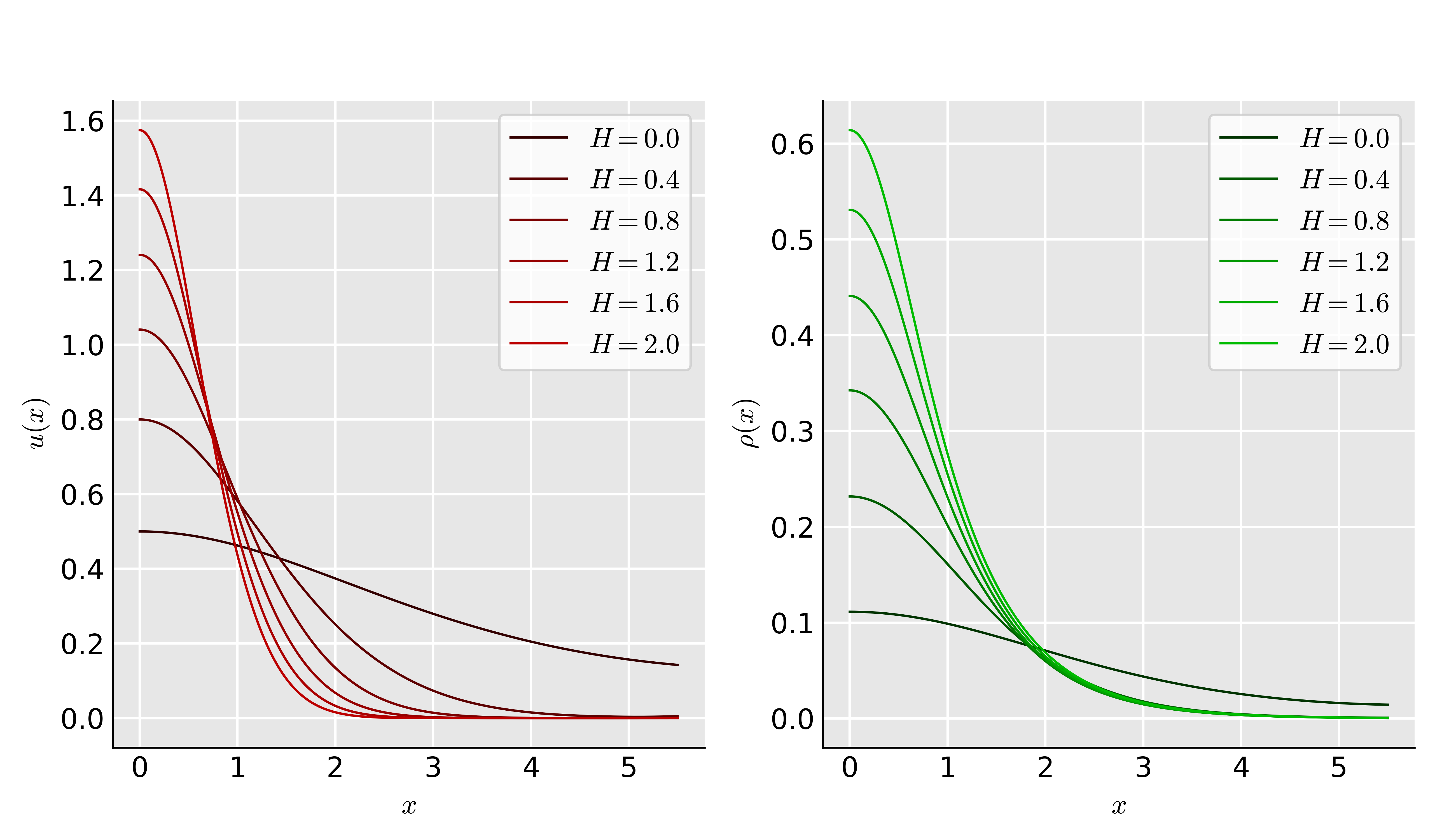}
  \caption{Numerical approximation of the standing wave $u(x)$ and the director field angle $\rho(x)$, solutions to \eqref{(1.12)}, with varying magnetic field intensity $H$. Parameters are $\mu =0.2$, $\lambda=0.1,$ $b=2$.}
  \label{fig:4}
\end{figure}

\subsection*{Acknowledgements} 
 P. Amorim was partially supported by Conselho Nacional de Pesquisa (CNPq) grant No. 308101/2019-7. J.-B. Casteras and J.P. Dias  were supported by the Funda\c c\~{a}o para a Ci\^{e}ncia e a Tecnologia (FCT) through the grant UIDB/04561/2020. J.P.Dias is indebted to R.Carles, H.Frid, E.Du\-cla Soares, A.Farinha Martins and L.Sanchez for important suggestions and comments. 

\end{document}